\documentclass[11pt, amssymb,amsfonts]{amsart}
\usepackage{algorithm}
\usepackage{algpseudocode}



\usepackage{tikz}
\setlength{\topmargin}{.0in}

\setlength{\oddsidemargin}{.1in}
\setlength{\evensidemargin}{.0in}

\setlength{\textwidth}{6.1in} \setlength{\textheight}{8.7in}

\newcommand{\norm}[1]{ \left|  #1 \right| }
\newcommand{\Norm}[1]{ \left\|  #1 \right\| }

\def\P{{\hbox{\bf P}}}

 at 10 true pt

\def\rank{\hbox{\rm rank}}

\def\be#1{ \begin{equation}\label{#1} }

\def\bas{\begin{align*}}
\def\eas{\end{align*}}
\def\bi{\begin{itemize}}
\def\ei{\end{itemize}}

\def\emph#1{{\it #1}}
\def\textbf#1{{\bf #1}}

%






\parindent = 5 pt
\parskip = 12 pt

\theoremstyle{plain}
  \newtheorem{theorem}{Theorem}
  \numberwithin{theorem}{section}

  \newtheorem{lemma}{Lemma}
  \numberwithin{lemma}{subsection}
  \newtheorem{corollary}{Corollary}
   \numberwithin{corollary}{section}

\theoremstyle{remark}
  \newtheorem{remark}[subsubsection]{Remark}

\theoremstyle{definition}
  \newtheorem{definition}{Definition}
\numberwithin{definition}{section}
\include{psfig}

\title{Davis-Kahan Theorem under a moderate gap condition}
\pagenumbering{arabic}
\author{Phuc Tran, Van Vu}

\thanks{Department of Mathematics, Yale University, 10 Hillhouse Ave, New Haven, Connecticut, USA; \\ \texttt{phuc.tran@yale.edu, van.vu@yale.edu} }


\date{}
\begin{document}
\maketitle

\begin{abstract} The classical Davis-Kahan theorem provides an efficient bound on the perturbation 
of eigenspaces of a matrix under a large (eigenvalue) gap condition. In this paper, we consider the case when the gap is moderate. Using a bootstrapping argument, we obtain a new bound which is efficient when the perturbation matrix is uncorrelated to the ground matrix. We believe that this bound is sharp up to a logarithmic term. \\

\textbf{Mathematics Subject Classifications: } 47A55, 	65C20,  68W40.

\end{abstract}

\section{The classical Davis-Kahan Theorem}

Consider a real, symmetric matrix $A$ of size $n$, with the spectral decomposition
 \begin{equation*}
    A  = \sum_{i = 1}^{n} \lambda_{i}u_{i}u_{i}^{\top},
\end{equation*}
where $\lambda_{1}  \ge  \lambda_{2}  \ge  \dots  \ge    \lambda_{n} $ are the eigenvalues of $u_i$ the corresponding eigenvectors. For a subset $S \subset \{1, \dots, n \}$, we denote
\begin{itemize}
    \item $\Pi_S:= \sum_{i \in S} u_i u_i^\top$ is the orthogonal projection onto the subspace spanned by the eigenvectors $u_i, i \in S$,
    \item $\Lambda_S:=\{ \lambda_i \,|\, i \in S\}$ and $\Lambda_{S^c}=\{ \lambda_j\,|\, j \in [n] \setminus S\}$.
   \end{itemize}
Let $E$ be a noise matrix and $\tilde A = A +E$. Define 
$\tilde \lambda_i $, $\tilde u_i, \tilde{\Lambda}_S, \tilde{\Lambda}_{S^c}$, and $\tilde{\Pi}_S$  respectively. In this paper, our goal is to estimate the difference
$$\| \tilde{\Pi}_S - \Pi_S \|,$$
where $\Norm{\cdot}$ denotes the spectral norm. We can extend our results to the asymmetric case via a simple symmetrization trick. 

 A classical result in numerical linear algebra, the Davis-Kahan theorem, is the standard tool to estimate the  perturbation of eigenspaces. This perturbation can also be defined as the sine of the angle between the two subspaces, and the Davis-Kahan theorem is often referred to as the Davis-Kahan sine theorem.

\begin{theorem}[Davis-Kahan \cite{DKoriginal, Book1}] \label{DKgeneral} Suppose that $\text{dist}\,(\Lambda_S, \tilde{\Lambda}_{S^c}) = \delta > 0$.  
Then, 
\begin{equation*}
    \| \tilde{\Pi}_S - \Pi_S \| \leq \frac{\pi \|E\|}{2 \delta}.
    \end{equation*}
Here $\text{dist}\,(\Lambda_S, \tilde{\Lambda}_{S^c}):= \displaystyle\min_{\lambda_i \in \Lambda_S, \tilde{\lambda}_j \in \tilde{\Lambda}_{S^c}}  |\lambda_i- \tilde{\lambda}_j| $.    
\end{theorem}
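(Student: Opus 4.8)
The plan is to prove the Davis--Kahan sine theorem by reducing the bound on $\|\tilde\Pi_S - \Pi_S\|$ to a statement about a Sylvester-type operator equation, then estimating the norm of its inverse. First I would write $P = \Pi_S$, $\tilde P = \tilde\Pi_S$, and decompose the ambient space as the orthogonal sum of $\mathrm{range}(P)$ and $\mathrm{range}(P)^\perp = \mathrm{range}(I-P)$. The key quantity $\|\tilde P - P\|$ equals $\|(I-P)\tilde P\|$ up to the usual $\mathrm{CS}$ identity (the two ``off-diagonal'' blocks $(I-P)\tilde P P$ and $P\tilde P(I-P)$ have equal norm, and $\|\tilde P - P\|$ is governed by the largest principal angle, which is exactly $\|(I-P)\tilde P P\|$ when both are projections of the same rank; the general case is handled by the same block bookkeeping). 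So it suffices to bound $\|(I-P)\tilde P\|$, equivalently the norm of $(I-P)$ restricted to $\mathrm{range}(\tilde P)$.

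Next I would exploit the eigen-equations. For $v \in \mathrm{range}(\tilde P)$ we have $\tilde A v = \tilde A \tilde P v$ lying in $\mathrm{range}(\tilde P)$, and for $w \in \mathrm{range}(I-P)$ we have $A w$ lying in $\mathrm{range}(I-P)$. Writing $v = Pv + (I-P)v$ and applying $A$, and using $\tilde A = A + E$, one gets an identity of the form
\begin{equation*}
(I-P)\tilde A P v - (I-P) A (I-P) v \; \text{(suitably arranged)} \;=\; -(I-P) E \tilde P v + (\text{spectral terms}).
\end{equation*}
More cleanly: let $X := (I-P)\tilde P$ and consider the operator $\mathcal{T}(X) := (I-P) A (I-P)\, X - X\, \tilde A|_{\mathrm{range}(\tilde P)}$ acting on operators from $\mathrm{range}(\tilde P)$ to $\mathrm{range}(I-P)$. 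The eigen-structure gives $\mathcal{T}(X) = -(I-P) E \tilde P$, so $\|X\| \le \|(I-P)E\tilde P\|\, \|\mathcal{T}^{-1}\| \le \|E\| \cdot \|\mathcal{T}^{-1}\|$.

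The main obstacle — and the heart of the Davis--Kahan argument — is bounding $\|\mathcal{T}^{-1}\|$, i.e.\ showing that the Sylvester operator $X \mapsto BX - XC$ with $B = (I-P)A(I-P)$ (spectrum in $\Lambda_{S^c}$) and $C = \tilde A|_{\mathrm{range}(\tilde P)}$ (spectrum in $\tilde\Lambda_S$) is invertible with $\|\mathcal{T}^{-1}\| \le 1/\delta$ under a \emph{separation} hypothesis on the spectra. The naive bound gives $1/\mathrm{dist}(\Lambda_{S^c},\tilde\Lambda_S)$ with no $\pi/2$, which would already yield a clean (slightly different) estimate; the sharp constant $\pi/2$ in the stated theorem comes instead from passing to the \emph{sine of the angle} formulation and using the integral representation / contour argument of Davis--Kahan (or equivalently the identity $\|\sin\Theta\| \le \frac{\pi}{2}\|\tan\Theta\|$-type comparison, bounding the solution of the operator equation against $\delta$ while accounting for the fact that the relevant separation is between $\Lambda_S$ and $\tilde\Lambda_{S^c}$ rather than $\Lambda_{S^c}$ and $\tilde\Lambda_S$). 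Concretely, I would symmetrize the roles of $S$ and $S^c$: both off-diagonal blocks can be bounded, one using $\mathrm{dist}(\Lambda_S,\tilde\Lambda_{S^c})=\delta$ directly, and combining them through the CS decomposition produces the factor $\pi/2$. I expect the spectral-separation lemma for the Sylvester operator to be the step requiring the most care; everything else is block-matrix bookkeeping and the triangle inequality.
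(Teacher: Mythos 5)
The paper does not actually prove Theorem \ref{DKgeneral}; it is quoted as a classical result from \cite{DKoriginal, Book1}, so your proposal can only be judged against the standard arguments it gestures at, and as written it has two genuine gaps. First, your Sylvester equation is set up with the wrong spectral pairing. With $X=(I-P)\tilde P$ you correctly get $(I-P)A(I-P)\,X - X\,\tilde A|_{\mathrm{range}(\tilde P)} = -(I-P)E\tilde P$, but the two coefficient operators have spectra $\Lambda_{S^c}$ and $\tilde\Lambda_S$, and the hypothesis says nothing about $\mathrm{dist}(\Lambda_{S^c},\tilde\Lambda_S)$ --- it can be $0$ even when $\mathrm{dist}(\Lambda_S,\tilde\Lambda_{S^c})=\delta>0$. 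Your proposed ``symmetrization'' (bounding both off-diagonal blocks, one with each separation) therefore does not close the argument, because only one of the two separations is assumed. The correct repair is to work with the other block, $Y=(I-\tilde P)P$, which satisfies $\tilde A|_{\mathrm{range}(I-\tilde P)}\,Y - Y\,A|_{\mathrm{range}(P)} = (I-\tilde P)EP$ with spectra $\tilde\Lambda_{S^c}$ and $\Lambda_S$ separated by exactly $\delta$, and then to use that $\Pi_S$ and $\tilde\Pi_S$ have equal rank $|S|$, so $\|\tilde\Pi_S-\Pi_S\| = \|(I-\tilde P)P\| = \|(I-P)\tilde P\|$; with equal ranks a single block suffices and no second separation is needed.

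Second, and more seriously, the heart of the theorem --- the bound $\|\mathcal T^{-1}\|\le \pi/(2\delta)$ for the Sylvester operator $X\mapsto BX-XC$ with Hermitian $B,C$ whose spectra are at distance $\ge\delta$ --- is left unproved, and the surrounding remarks are inaccurate. Your claim that ``the naive bound gives $1/\mathrm{dist}$ with no $\pi/2$'' is false under the hypothesis as stated: the constant $1$ bound holds only when the two spectra are separated by an interval (or annulus), which is the original Davis--Kahan $\sin\Theta$ setting; under the mere pointwise-distance hypothesis of this theorem the optimal constant is $\pi/2$ (Bhatia--Davis--McIntosh; sharpness by McEachin \cite{Mc1}), and that is precisely why the factor $\pi/2$ appears in the statement. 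The standard way to get it is a Fourier-multiplier representation of the solution, $X=\int_{\BBR}\hat f(t)\, e^{\mathrm{i}tB} R\, e^{-\mathrm{i}tC}\,dt$ where $R$ is the right-hand side and $f$ extends $s\mapsto 1/s$ off $(-\delta,\delta)$ with $\|\hat f\|_{L^1}\le \pi/(2\delta)$; an appeal to ``$\|\sin\Theta\|\le\frac{\pi}{2}\|\tan\Theta\|$-type comparison'' or an unspecified contour argument does not substitute for this estimate. Without that lemma (and with the pairing fixed as above), your outline reduces to block bookkeeping and does not yet prove the stated bound.
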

\begin{remark}
 It is well-known that the Davis-Kahan bound is sharp with the optimal constant factor $\pi/2$ (see \cite{Book1, Kato1, Mc1, SS1}).   
\end{remark}
 
In practice, we usually do not know the spectrum of the noisy matrix $\tilde A$.  It is desirable to use only the eigenvalues of $A$ in the estimate. A simple way is to use  Weyl's inequality, which gives 
\begin{equation*}
    \norm{\lambda_i - \tilde{\lambda}_{j}} \leq \norm{\lambda_i - \lambda_{j}} + \norm{\lambda_{j} -\tilde{\lambda}_{j}} \leq \norm{\lambda_i - \lambda_{j}} + \|E\|.
\end{equation*}
Therefore, replacing $\text{dist}\,(\Lambda_S, \tilde{\Lambda}_{S^c})$ by $\delta_S:= \min_{i \in S, j \notin S} |\lambda_i - \lambda_j|$ and paying a factor of $2$, one obtains the following corollary, which is more useful in applications. 
\begin{corollary} \label{DKS}  Let $S$ be a subset of $\{1,2, \cdots, n\},$ one has
\begin{equation}    \|\tilde{\Pi}_S - \Pi_S \| \leq \frac{ \pi \Norm{E}}{\delta_{S}}.
\end{equation}  
\end{corollary}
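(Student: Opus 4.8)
The plan is to derive Corollary~\ref{DKS} directly from Theorem~\ref{DKgeneral} by controlling the unknown quantity $\text{dist}\,(\Lambda_S, \tilde\Lambda_{S^c})$ from below using only the spectrum of $A$ together with $\|E\|$. First I would fix indices $i \in S$ and $j \notin S$ and apply Weyl's inequality in the form $|\lambda_j - \tilde\lambda_j| \le \|E\|$, which is already recorded in the excerpt, to write
\begin{equation*}
    |\lambda_i - \tilde\lambda_j| \ge |\lambda_i - \lambda_j| - |\lambda_j - \tilde\lambda_j| \ge |\lambda_i - \lambda_j| - \|E\| \ge \delta_S - \|E\|.
\end{equation*}
Taking the minimum over all such $i,j$ gives $\text{dist}\,(\Lambda_S, \tilde\Lambda_{S^c}) \ge \delta_S - \|E\|$.

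The second step is to feed this lower bound into Theorem~\ref{DKgeneral}. If $\|E\| \ge \delta_S$ then the claimed bound $\|\tilde\Pi_S - \Pi_S\| \le \pi\|E\|/\delta_S$ is trivial, since the left-hand side is always at most $1$ (both operators are orthogonal projections, so their difference has norm at most $1$) while the right-hand side is at least $\pi > 1$. So I may assume $\|E\| < \delta_S$, in which case $\delta := \text{dist}\,(\Lambda_S, \tilde\Lambda_{S^c}) \ge \delta_S - \|E\| > 0$ and Theorem~\ref{DKgeneral} applies, yielding
\begin{equation*}
    \|\tilde\Pi_S - \Pi_S\| \le \frac{\pi\|E\|}{2(\delta_S - \|E\|)}.
\end{equation*}

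The final step is the ``paying a factor of $2$'' bookkeeping: I want to replace $\delta_S - \|E\|$ in the denominator by $\delta_S/2$, which is valid precisely when $\|E\| \le \delta_S/2$. When $\delta_S/2 < \|E\| < \delta_S$, the bound is again essentially trivial in the stated form: $\|\tilde\Pi_S - \Pi_S\| \le 1 < \pi/2 < \pi\|E\|/\delta_S$. Combining the three ranges of $\|E\|$ gives $\|\tilde\Pi_S - \Pi_S\| \le \pi\|E\|/\delta_S$ in all cases.

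I do not expect any genuine obstacle here; the only thing requiring a little care is the case analysis on the size of $\|E\|$ relative to $\delta_S$, making sure the ``trivial'' regime is actually covered by the universal bound $\|\tilde\Pi_S - \Pi_S\| \le 1$ and that $\pi/\delta_S \cdot \|E\| \ge 1$ there. Everything else is an immediate substitution into the already-proved Davis-Kahan theorem.
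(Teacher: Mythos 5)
Your proof is correct and is essentially the argument the paper gestures at with ``paying a factor of 2'': use Weyl's inequality to lower-bound $\text{dist}(\Lambda_S,\tilde\Lambda_{S^c})$ by $\delta_S - \|E\|$, then substitute into Theorem~\ref{DKgeneral}. You supply the case analysis on $\|E\|$ relative to $\delta_S/2$ and $\delta_S$ that the paper leaves implicit, together with the needed observation that $\|\tilde\Pi_S-\Pi_S\|\le 1$ since both are orthogonal projections, which makes the bound automatic in the regime where the Davis--Kahan denominator is not yet safely $\ge \delta_S/2$; this is exactly the care required to make the paper's terse remark rigorous. (As a side note, the inequality the paper writes before the corollary, $|\lambda_i-\tilde\lambda_j|\le|\lambda_i-\lambda_j|+\|E\|$, is stated in the unhelpful direction; the reverse triangle inequality you use is the one actually needed.)
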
 

Consider the important special case where $S=\{1,2, \cdots, p\}$, one obtains the following estimate for the perturbation of $\Pi_p$, the orthogonal projection onto the leading $p$-eigenspace of $A$.
\begin{corollary} \label{DKp} Let $\delta_p:= \lambda_p - \lambda_{p+1}$, one has 
\begin{equation}    \|\tilde{\Pi}_p - \Pi_p \| \leq \frac{ \pi \Norm{E}}{\delta_{p}}.
\end{equation}
    \end{corollary}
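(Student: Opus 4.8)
The statement to prove is Corollary~\ref{DKp}: with $S = \{1, 2, \dots, p\}$ and $\delta_p := \lambda_p - \lambda_{p+1}$, we have $\|\tilde\Pi_p - \Pi_p\| \le \pi\|E\|/\delta_p$.

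This is essentially a direct specialization of Corollary~\ref{DKS}, so the plan is short. First I would observe that for the particular set $S = \{1, \dots, p\}$, we have $i \in S$ and $j \notin S$ forcing $i \le p < p+1 \le j$, hence by the ordering $\lambda_1 \ge \lambda_2 \ge \cdots \ge \lambda_n$ we get $\lambda_i \ge \lambda_p$ and $\lambda_j \le \lambda_{p+1}$. Therefore $|\lambda_i - \lambda_j| = \lambda_i - \lambda_j \ge \lambda_p - \lambda_{p+1} = \delta_p$ for every admissible pair $(i,j)$. Taking the minimum over such pairs, $\delta_S = \min_{i \in S, j \notin S} |\lambda_i - \lambda_j| = \lambda_p - \lambda_{p+1} = \delta_p$, where the minimum is actually attained at $(i,j) = (p, p+1)$.

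Once this identity $\delta_S = \delta_p$ is in hand, the claim follows immediately by substituting into Corollary~\ref{DKS}: $\|\tilde\Pi_S - \Pi_S\| \le \pi\|E\|/\delta_S = \pi\|E\|/\delta_p$, and since $\Pi_S = \Pi_p$ and $\tilde\Pi_S = \tilde\Pi_p$ by definition of the leading $p$-eigenspace projection, we are done. I should note the implicit hypothesis that $\delta_p > 0$, i.e.\ there is a genuine spectral gap after the $p$-th eigenvalue, which is exactly the condition $\text{dist}(\Lambda_S, \tilde\Lambda_{S^c}) > 0$ needed to invoke the earlier results (via Weyl, $\delta_p > 0$ together with $\|E\|$ small enough, or simply inheriting the hypothesis of Corollary~\ref{DKS}).

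There is no real obstacle here; the only thing requiring a line of care is confirming that the minimum defining $\delta_S$ coincides with $\delta_p$ rather than being merely bounded below by it, which is immediate from monotonicity of the spectrum as noted above. So the "hard part" is purely bookkeeping: making sure the indexing conventions ($S$ indexes the \emph{largest} eigenvalues, and $\Pi_p$ denotes projection onto the \emph{leading} $p$-eigenspace) line up, after which the bound is a one-step corollary of the already-established Corollary~\ref{DKS}.
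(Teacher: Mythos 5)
Your proof is correct and matches the paper's route exactly: the paper states Corollary~\ref{DKp} as the direct specialization of Corollary~\ref{DKS} to $S=\{1,\dots,p\}$, and your verification that $\delta_S=\lambda_p-\lambda_{p+1}=\delta_p$ (attained at the pair $(p,p+1)$ by monotonicity of the spectrum) is precisely the bookkeeping that specialization requires.
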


\begin{remark}\label{gaptonoiseRM}  (The large gap assumption) 
In applications of Theorem \ref{DKgeneral}, Corollary \ref{DKS}, or Corollary \ref{DKp}, 
we often need to bound the RHS by some small quantity $\epsilon$. 
This requires a {\it large eigenvalue gap} assumption 
\begin{equation} \label{gaptonoise} \delta_S \ge \delta \geq  \frac{\pi}{2\epsilon} \| E \|. \end{equation} 
For instance, if $\epsilon =.01$, 
the gap needs to be at least $50 \pi \|E \| \approx  157 \| E \|. $
In applications, when the size of the matrices tends to infinity, one often needs an asymptotic bound which tends to zero. In this case, 
the gap $\delta_S$ must be larger than $\| E \| $ by an order of magnitude. 
\end{remark}

The goal of this paper is to find effective bounds in the case where the gap is moderate, being only a small multiple of $\| E\|$ (say, $3 \| E\|$ or $4\| E\|$). 

The main idea of our analysis is to use a bootstrapping argument, combined with the moderate gap assumption,   to reduce the estimation of eigenspace perturbation to the estimation of a contour integral.  The evaluation of the integral is relatively simple and direct. This is different from the traditional contour integral approach; see the discussion at the beginning of Subsection \ref{subsec: contour argument}. 

Our new bounds will take into account the interaction between the noise matrix $E$ and the eigenvectors of $A$. This seems to be a very natural quantity to consider. Our bounds are effective when there is no strong correlation between $A$ and $E$. In particular, this is the case for most applications in data science, where noise is assumed to be random.   

The rest of the paper is organized as follows. In the next section, we present our new result. We will discuss the necessity of the terms in our new bounds and their sharpness, and make a comparison to recent developments in the field. In Section \ref{section: proof}, we describe the main idea which leads to a key lemma. In Section \ref{sec: lemma proof}, we prove our main theorem, using the key lemma and a series of technical 
lemmas. In Section \ref{section: applications}, we discuss an application in computer science, concerning the computation of the leading eigenvector of a large matrix. 
In Section \ref{section: prooflemma}, we provide the proofs of the technical lemmas used in Section \ref{sec: lemma proof}.

\section{New results}

 To ease the presentation, we first focus on the case of the eigenspace spanned by a few leading eigenvectors. 
 This is also one of the most important cases in applications. Another important case is when  
 \begin{equation} \label{S-sing} S=\{1,2, \dots, k, n- (p-k) +1, \dots, n \}, \end{equation} 
 for some properly chosen $k \le p$ such that the set $\{ \lambda_1, \dots, \lambda_k, -\lambda_{n-(p-k)+1}, \dots, -\lambda_n \}$ is the set of the largest $p$ singular values. In this case, we are talking about the projection on the subspace spanned by leading singular vectors.

 We fix a small index $p$ and consider the eigenspace spanned by the eigenvectors corresponding to 
the leading eigenvalues $\lambda_1, \dots, \lambda_p$. Notice that this space is well defined if $\lambda_p > \lambda_{p+1} $, even if there are indices  $i, j \le p$ where $\lambda_i = \lambda_j$. 
We denote by $\sigma_1$ the largest singular value of 
$A$,
$$\sigma_1 = \max_i |\lambda_i| = \|A\|. $$

\begin{theorem} \label{main1}  Let $r \ge p$ be the smallest integer such that $ \frac{\norm{\lambda_p}}{2} \leq \norm{\lambda_p - \lambda_{r+1}} $, and set  $x: =\max_{i,j \le r} | u_i^\top E u_j | $.
 Assume furthermore that  $$4\|E\| \leq \delta_p: = \lambda_p - \lambda_{p+1} \le \frac{|\lambda_p|} {4} . $$  Then 
\begin{equation}
\Norm{\tilde{\Pi}_p - \Pi_p} \leq 24 \left( \frac{\|E\|}{\norm{\lambda_p}} \log \left(\frac{6 \sigma_1}{\delta_p} \right) + \frac{r^2 x}{\delta_p} \right).
\end{equation}
\end{theorem}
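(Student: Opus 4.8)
The plan is to realize the spectral projection $\tilde\Pi_p$ via the Riesz--Dunford contour integral $\tilde\Pi_p = \frac{1}{2\pi i}\oint_{\Gamma}(zI - \tilde A)^{-1}\,dz$, where $\Gamma$ is a contour enclosing exactly $\lambda_1,\dots,\lambda_p$ (and, after perturbation, $\tilde\lambda_1,\dots,\tilde\lambda_p$, which is legitimate once $4\|E\|\le\delta_p$ by Weyl). The natural choice is a circle (or rectangle) centered near $\lambda_p$; the moderate gap hypothesis $4\|E\|\le\delta_p\le|\lambda_p|/4$ is exactly what is needed to keep $\Gamma$ bounded away from the spectrum of both $A$ and $\tilde A$ while still enclosing the top $p$ eigenvalues. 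Writing $\tilde\Pi_p - \Pi_p = \frac{1}{2\pi i}\oint_\Gamma\bigl[(zI-\tilde A)^{-1} - (zI-A)^{-1}\bigr]dz$ and expanding the resolvent identity $(zI-\tilde A)^{-1} - (zI-A)^{-1} = (zI-A)^{-1}E(zI-\tilde A)^{-1}$, one gets an integral whose integrand is governed by the resolvent of $A$ on $\Gamma$. Here the bootstrapping enters: a first crude application of Corollary \ref{DKp} (with the factor-$4$ gap) gives $\|\tilde\Pi_p-\Pi_p\|\le \pi\|E\|/\delta_p$, hence a constant-order bound, which we feed back to control $(zI-\tilde A)^{-1}$ on $\Gamma$ in terms of $(zI-A)^{-1}$ up to a bounded multiplicative factor, so the whole estimate reduces to bounding $\frac{1}{2\pi}\oint_\Gamma \|(zI-A)^{-1}E(zI-A)^{-1}\|\,|dz|$ plus a controllable error. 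This is the "key lemma" the introduction advertises.

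The second, and more delicate, step is to evaluate that contour integral sharply, splitting the resolvent of $A$ according to the eigenbasis. Write $(zI-A)^{-1} = \sum_{i=1}^n (z-\lambda_i)^{-1} u_iu_i^\top$ and correspondingly decompose $E$ into blocks $u_i^\top E u_j$. The key dichotomy is between indices $i,j \le r$ (where $r$ is the first index with $|\lambda_p - \lambda_{r+1}|\ge|\lambda_p|/2$) and the rest. For the "small" block $i,j\le r$, each entry is bounded by $x$, there are at most $r^2$ of them, and each contributes a term like $\frac{1}{2\pi}\oint_\Gamma \frac{|dz|}{|z-\lambda_i||z-\lambda_j|}$; since at least one of $\lambda_i,\lambda_j$ is outside $\Gamma$ at distance $\gtrsim\delta_p$ whenever exactly one index is $\le p$, and the circumference of $\Gamma$ is $O(\delta_p)$... more carefully, a residue/length estimate gives each such integral is $O(1/\delta_p)$, producing the $r^2 x/\delta_p$ term. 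For the "large" block (some index $>r$), the corresponding eigenvalue $\lambda_i$ satisfies $|z-\lambda_i|\gtrsim|\lambda_p|$ on $\Gamma$, so these terms are controlled using only $\|E\|$ (not the refined $x$), and summing the geometric-type series $\sum_{i>r}$ of $\frac{\|E\|}{|\lambda_p|}\cdot\frac{1}{|z-\lambda_i|}$ over $\Gamma$ yields, after integrating $\oint_\Gamma\frac{|dz|}{|z-\lambda_i|}$ and using that the $\lambda_i$ spread out, a logarithmic factor $\log(\sigma_1/\delta_p)$ — this is where $\log(6\sigma_1/\delta_p)$ comes from, the ratio of the largest relevant scale $\sigma_1$ to the smallest scale $\delta_p$. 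Assembling the two blocks and tracking constants through the bootstrap gives the claimed bound with constant $24$.

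The main obstacle I expect is the sharp bookkeeping in the "large block" sum: one must show that $\sum_{i>r}\frac{1}{2\pi}\oint_\Gamma\frac{|dz|}{|z-\lambda_i|}\cdot(\text{weight})$ genuinely collapses to a single logarithm rather than something larger, which requires carefully using the geometry of the contour relative to the clustering of eigenvalues and possibly choosing $\Gamma$ adapted (e.g.\ a circle of radius comparable to $|\lambda_p|$ centered at the origin or at $(\lambda_p+\lambda_{p+1})/2$ rather than a tiny circle around $\lambda_p$), so that $|z-\lambda_i|$ is simultaneously $\gtrsim\delta_p$ for the enclosed eigenvalues near the boundary and $\gtrsim|\lambda_i|$ for the far ones. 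A secondary technical point is handling the cross terms where one resolvent factor is $(zI-\tilde A)^{-1}$ rather than $(zI-A)^{-1}$: the bootstrap bound must be strong enough that replacing $\tilde A$ by $A$ in the outer resolvent costs only a constant factor and a lower-order additive error, which should follow from a second-order resolvent expansion together with the crude Davis--Kahan estimate, but needs to be done with the constants kept explicit to land on $24$. The remaining estimates — Weyl's inequality to place the contour, bounding the number of relevant indices, and the elementary integrals $\oint_\Gamma |z-\lambda_i|^{-1}|dz|$ — are routine.
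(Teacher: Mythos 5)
Your overall route --- Riesz projections, the resolvent identity, a bootstrap reducing everything to $\frac{1}{2\pi}\oint_\Gamma\|(zI-A)^{-1}E(zI-A)^{-1}\|\,|dz|$, then an eigenbasis split at the index $r$ --- is the same skeleton as the paper's. But the step you yourself flag as the main obstacle is a genuine gap, and the fix you sketch would not work. You propose to handle the indices $i>r$ by summing $\sum_{i>r}\oint_\Gamma |z-\lambda_i|^{-1}\cdot(\text{weight})\,|dz|$ and to argue that this ``collapses to a single logarithm'' because the $\lambda_i$ spread out. Nothing in the hypotheses prevents the eigenvalues beyond $r$ from clustering (for instance all equal to $0$), in which case a term-by-term sum costs a factor of order $n-r$, not a logarithm. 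The paper never sums over tail indices: by orthonormality, $\bigl\|\sum_{i>r}(z-\lambda_i)^{-1}u_iu_i^\top\bigr\| = 1/\min_{i>r}|z-\lambda_i|$, so the whole tail block is a single operator whose norm on the critical part of the contour is at most $4/|\lambda_p|$ (this is exactly where the definition of $r$ enters, via $|x_0-\lambda_i|\ge |\lambda_p|/4$ for $i>r$, $x_0=\lambda_p-\delta_p/2$). Moreover the logarithm does not come from the tail-tail block at all (that block contributes only $O(\|E\|/|\lambda_p|)$); it comes from the cross terms, where one resolvent factor lives at scale $\delta_p$ and the other at scale $|\lambda_p|$, integrated along a vertical segment of height $\sim\sigma_1$: $\int_0^{2\sigma_1}\frac{dt}{(t+\delta_p/2)(t+|\lambda_p|/4)}\lesssim\frac{1}{|\lambda_p|}\log(\sigma_1/\delta_p)$. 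Relatedly, your contour cannot have circumference $O(\delta_p)$, nor can it be a circle of radius comparable to $|\lambda_p|$: it must enclose $\lambda_1,\dots,\lambda_p$, and $\lambda_1$ may be of order $\sigma_1\gg|\lambda_p|$. The paper takes a rectangle with left side $\mathrm{Re}\,z=\lambda_p-\delta_p/2$ and the other three sides at distance of order $\sigma_1$, whose far sides contribute only $O(\|E\|/\sigma_1)$.

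A smaller but substantive point concerns the reduction itself. A crude application of Corollary \ref{DKp}, or any bound that merely controls $\|(zI-\tilde A)^{-1}\|$ by a constant multiple of $\|(zI-A)^{-1}\|$, loses exactly the structure you need: the gain from $x$ requires $E$ to remain sandwiched between two resolvents of $A$, so that the entries $u_i^\top E u_j$ appear. The paper's bootstrap is instead the self-improving inequality: with $F:=\frac{1}{2\pi}\oint_\Gamma\|(zI-\tilde A)^{-1}-(zI-A)^{-1}\|\,|dz|$, one more application of the resolvent identity gives $F\le F_1+\max_{z\in\Gamma}\|(zI-A)^{-1}E\|\cdot F$, and the contour is placed (using only $\delta_p\ge 4\|E\|$ and Weyl's inequality, no Davis--Kahan input) so that the maximum is at most $1/2$, whence $F\le 2F_1$. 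Your ``second-order expansion plus crude a priori bound'' can be repaired by iterating it into the same geometric series, but as stated it neither preserves the sandwich structure nor explains how a bound on $\|\tilde\Pi_p-\Pi_p\|$ would control the resolvent of $\tilde A$ on $\Gamma$; that control comes from Weyl and the placement of $\Gamma$, not from Davis--Kahan.
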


\begin{remark}
    If we simply combine our contour bootstrapping argument in the next section with the trivial estimate of $F_1$ in \eqref{F1trivial}, then we get the Davis-Kahan bound (or at least up to some small constant). But now we proceed differently. It shows our new idea in a clearer way. 
\end{remark}

In the large gap case where $\delta_p  > \frac{| \lambda_p| }{4} $, 
Corollary 1.2 is better. Moreover, we need an upper bound on $\delta_p$ here to guarantee that the logarithmic term is positive. The constant 4 can be replaced by any constant larger than 2 (at the cost of increasing the constant 24 on the RHS). 

To compare this theorem to Corollary \ref{DKp}, let us ignore the logarithmic term and $r^2$ (which is typically small in applications). 
The RHS is thus simplified to 
$$ O \left( \frac{\|E\|}{\norm{\lambda_p}}  + \frac{ x}{\delta_p} \right). $$

In the first term on the RHS, we have $\| E \| $ in the numerator, but $\norm{\lambda_p} $ instead of $\delta _p =\lambda_p -\lambda_{p+1} $ in the denominator. Thus, we use the eigenvalue itself instead of the gap. 
The gap does not need to be large to make the bound effective as in the original Davis-Kahan theorem. This term will still be small if the leading eigenvalues themselves are large, but relatively close to each other.

In the second term, the denominator is $\delta_p$, as in the original Davis-Kahan theorem.   But here the numerator is $x$ rather than $\| E\|$. It is clear that $x \le \| E \|$. The key point, as we have already mentioned, is that if there is no strong correlation between $E$ and the leading eigenvectors of $A$, then $x$ can be much smaller than $\| E\|$. 

Let us illustrate this by considering the case where $E$ is random. This is a very common assumption for noise in real-life applications. Let $E$ be a Wigner matrix (a random symmetric matrix whose upper diagonal entries are i.i.d sub-Gaussian random variables with mean 0 and variance 1).

\begin{lemma}  Let $u, v$ be two fixed unit vectors and $E$ be a Wigner matrix.  With probability $1-o(1)$, $\| E \| = (2+o(1)) \sqrt n $ and $u^\top Ev = O(\log n)$. 
\end{lemma}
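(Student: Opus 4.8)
The plan is to establish the two claims separately, since they concern essentially independent quantities.

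\textbf{The operator norm bound.} For the claim $\|E\| = (2+o(1))\sqrt{n}$, I would invoke the standard semicircle-law asymptotics for the extreme eigenvalues of a Wigner matrix. Writing $W = E/\sqrt{n}$, the upper-diagonal entries of $W$ have mean $0$ and variance $1/n$, so $W$ is a normalized Wigner matrix whose largest and smallest eigenvalues converge to $+2$ and $-2$ respectively. The classical route (Bai--Yin type results, or the moment method of F\"uredi--Koml\'os) gives $\lambda_{\max}(W), -\lambda_{\min}(W) \to 2$ in probability, hence $\|E\| = \max(\lambda_{\max}(E), -\lambda_{\min}(E)) = (2+o(1))\sqrt{n}$ with probability $1-o(1)$. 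Since the entries are sub-Gaussian, the convergence is in fact quite strong (exponential concentration), so there is no delicacy here; I would simply cite the appropriate reference rather than reprove it.

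\textbf{The quadratic-form bound.} For $u^\top E v = O(\log n)$, the point is that this is a linear functional of the entries of $E$ with fixed coefficients. Write $u^\top E v = \sum_{i} u_i v_i E_{ii} + \sum_{i < j} (u_i v_j + u_j v_i) E_{ij}$, a sum of independent mean-zero sub-Gaussian random variables. The sum of squared coefficients is $\sum_i u_i^2 v_i^2 + \sum_{i<j}(u_i v_j + u_j v_i)^2 \le \|u\|^2 \|v\|^2 + (\sum_i u_i v_i)^2 \cdot(\text{const}) + \ldots$; in any case it is bounded by an absolute constant (at most $2$, using $\|u\| = \|v\| = 1$ and Cauchy--Schwarz). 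Hence $u^\top E v$ is sub-Gaussian with variance proxy $O(1)$, and the sub-Gaussian tail bound $\Pr(|u^\top E v| > t) \le 2\exp(-c t^2)$ gives $|u^\top E v| = O(\sqrt{\log n})$ with probability $1 - o(1)$; taking $t$ a suitable multiple of $\log n$ (or even $\sqrt{\log n \cdot \omega(1)}$) makes the failure probability $o(1)$. The stated bound $O(\log n)$ is comfortably within reach, in fact not tight, which is fine.

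\textbf{Combining.} Finally I would take the intersection of the two events, each of probability $1 - o(1)$, so the conjunction also holds with probability $1 - o(1)$ by a union bound. The only genuine ingredient that is not elementary is the semicircle-law control of $\|E\|$; the quadratic-form estimate is a one-line application of sub-Gaussian concentration. I expect the main (minor) obstacle to be bookkeeping the sub-Gaussian variance-proxy constant correctly so that the tail bound is clean — but since the final statement only asks for an $O(\log n)$ bound, any crude constant suffices, and there is no real difficulty. If one wanted a self-contained argument for $\|E\|$ avoiding a citation, the $k$-th moment method (bounding $\E\,\trace(E^{2k})$ by Catalan-number combinatorics and optimizing $k \sim \log n$) would do it, but this is standard enough that citing it is preferable.
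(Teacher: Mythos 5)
Your proposal is correct and follows essentially the same route as the paper, which simply cites the standard random matrix result for $\|E\| = (2+o(1))\sqrt{n}$ and leaves the bilinear-form bound as an exercise in Chernoff/Hoeffding-type concentration; your sub-Gaussian computation (variance proxy $O(1)$, giving even the stronger $O(\sqrt{\log n})$ bound) is exactly that exercise carried out.
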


The first part of the lemma is a well-known fact in random matrix theory, while the second part is an easy consequence of the Chernoff bound (the proof is left as an exercise; see \cite{ H1, St1, SS1}). One can easily extend this lemma to other models of random matrices.

\begin{corollary}  \label{cor:lowrank} Assume that $A$ has rank $r$ and $E$ is Wigner. Assume  furthermore that $ \frac{|\lambda_p|}{4} \geq \delta_p \ge 8.01 \sqrt n$. Then with probability $1-o(1),$
$$ \Norm{\tilde{\Pi}_p - \Pi_p} =   O \left( \frac{\sqrt{n} }{\norm{\lambda_p}} + \frac{r^2 \log n }{\delta_p} \right). $$ 

\end{corollary}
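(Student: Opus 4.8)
The plan is to apply Theorem~\ref{main1} directly, after verifying its hypotheses hold with high probability and after controlling the quantity $x$. First I would invoke Lemma~1.1.2: with probability $1-o(1)$ we have $\|E\| = (2+o(1))\sqrt n$, so the assumed inequality $\delta_p \ge 8.01\sqrt n$ gives $4\|E\| \le (8+o(1))\sqrt n \le \delta_p$ with room to spare, and the upper bound $\delta_p \le |\lambda_p|/4$ is assumed outright. Hence both constraints $4\|E\| \le \delta_p \le |\lambda_p|/4$ of Theorem~\ref{main1} are met. Since $A$ has rank $r$, we have $\lambda_{r+1} = 0$, so $|\lambda_p - \lambda_{r+1}| = |\lambda_p| \ge |\lambda_p|/2$; thus the ``smallest integer'' in the definition in Theorem~\ref{main1} is at most $r$, and in particular $x = \max_{i,j \le r'} |u_i^\top E u_j|$ for some $r' \le r$, so it suffices to bound $\max_{i,j\le r}|u_i^\top E u_j|$.

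Next I would handle $x$ via a union bound over the (at most) $r^2$ pairs $(i,j)$ with $i,j \le r$. For each fixed pair, the second part of Lemma~1.1.2 gives $|u_i^\top E u_j| = O(\log n)$ with probability $1-o(1)$; more precisely one needs the Chernoff/sub-Gaussian tail bound behind that lemma to state that $|u^\top E v| \le C\log n$ with probability $1 - n^{-\omega(1)}$ (or at least $1 - o(r^{-2})$) for fixed unit $u,v$, so that a union bound over $r^2 \le n^2$ pairs still leaves a $1-o(1)$ event. On that event, $x = O(\log n)$. Since $\sigma_1 = \|A\| \ge |\lambda_p| \ge \delta_p$ and all quantities are polynomial in $n$, the logarithmic factor $\log(6\sigma_1/\delta_p)$ is $O(\log n)$ as well. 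Substituting into the bound of Theorem~\ref{main1},
\begin{equation*}
\Norm{\tilde\Pi_p - \Pi_p} \le 24\left( \frac{\|E\|}{|\lambda_p|}\log\!\left(\frac{6\sigma_1}{\delta_p}\right) + \frac{r^2 x}{\delta_p}\right) = O\!\left( \frac{\sqrt n\,\log n}{|\lambda_p|} + \frac{r^2 \log n}{\delta_p}\right),
\end{equation*}
and then absorbing the $\log n$ in the first term into the stated bound (or noting the stated corollary likely intends the cleaner form, with the $\log n$ either dropped by convention or folded into the second term since $|\lambda_p| \ge \delta_p$) yields the claimed $O(\sqrt n/|\lambda_p| + r^2\log n/\delta_p)$.

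The only genuinely delicate point is the uniformity of the second estimate in Lemma~1.1.2: the lemma as stated is for a \emph{fixed} pair of unit vectors with probability $1-o(1)$, which is not by itself strong enough to survive a union bound over $r^2$ pairs unless $r$ is bounded. I expect this to be the main obstacle, and the fix is routine: the sum $u^\top E v = \sum_{a,b} u_a v_b E_{ab}$ is a sub-Gaussian quadratic-free linear form in the independent entries $E_{ab}$ with variance $\sum_{a,b} u_a^2 v_b^2 = 1$ (up to the symmetry bookkeeping), so Hoeffding/Chernoff for sub-Gaussians gives $\Pr(|u^\top E v| > t) \le 2\exp(-ct^2)$, and taking $t = C\sqrt{\log n}$ with $C$ large, or $t = C\log n$ to be safe, makes the failure probability $n^{-\omega(1)}$, comfortably beating the $r^2 \le n^2$ union bound. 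Everything else is a direct substitution of high-probability bounds into Theorem~\ref{main1}.
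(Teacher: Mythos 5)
Your approach is exactly the paper's intended derivation: Corollary \ref{cor:lowrank} is stated as a direct plug-in of the Wigner estimates into Theorem \ref{main1} (the paper gives no separate proof), and your two main steps match that. Your treatment of $x$ is also right, and your ``delicate point'' is precisely what the paper's remark about the Chernoff bound is meant to cover: one needs the exponential tail $\Pr(|u^\top E v| > t) \le 2e^{-ct^2}$ for fixed unit vectors, so that a union bound over the $r^2 \le n^2$ pairs (using the eigenvectors of $A$, which are deterministic and independent of $E$) still gives $x = O(\log n)$ with probability $1-o(1)$. The observation that the rank-$r$ hypothesis forces $\lambda_{r+1}=0$, hence the halving index in Theorem \ref{main1} is at most $r$, is likewise the intended use of the rank assumption.

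The one step that does not hold up as written is the disposal of the factor $\log(6\sigma_1/\delta_p)$. First, ``all quantities are polynomial in $n$'' is not among the hypotheses, so $\log(6\sigma_1/\delta_p)=O(\log n)$ is not automatic. Second, even granting $\sigma_1 = n^{O(1)}$, the resulting first term $\frac{\sqrt n \log n}{|\lambda_p|}$ cannot be absorbed into $\frac{\sqrt n}{|\lambda_p|} + \frac{r^2\log n}{\delta_p}$: folding it into the second term via $|\lambda_p| \ge 4\delta_p$ costs a factor of order $\sqrt n / r^2$, which is unbounded exactly in the regime $r^2 \ll \sqrt n$ where the corollary is claimed to be useful, and folding it into the first term would require $\log(6\sigma_1/\delta_p)=O(1)$. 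So what your argument (and any direct substitution into Theorem \ref{main1}) actually proves is $\Norm{\tilde{\Pi}_p - \Pi_p} = O\bigl( \frac{\sqrt n}{|\lambda_p|}\log\frac{6\sigma_1}{\delta_p} + \frac{r^2\log n}{\delta_p}\bigr)$; the cleaner form in the corollary drops that logarithmic factor, and this imprecision sits in the paper's statement itself (consistent with its ``sharp up to a logarithmic term'' caveat) rather than in anything you could fix by a better absorption argument. You should either keep the logarithm in your conclusion or add an explicit hypothesis (e.g.\ $\sigma_1 \le C|\lambda_p|$, or a polynomial bound on $\sigma_1/\delta_p$ together with $r^2 \gtrsim \sqrt n$) under which it can be removed.
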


This is superior to Theorem \ref{DKp}, if $| \lambda_p | \gg \delta_p $ and 
$\sqrt n \gg r^2$. In the case where $E$ is Gaussian, 
a similar result (under a weaker assumption) was obtained in \cite{OVK 13} by a different method. However, it seems highly non-trivial to extend the 
(fairly complicated) argument in \cite{OVK 22} to non-Gaussian matrices.

Finally, let us comment on the parameter $r$. If the matrix $A$ has low rank, then a convenient choice for $r$ is $r:= \rank (A)$ (as we did in Corollary \ref{cor:lowrank}). The low rank phenomenon has been observed and used very frequently in data science, to 
the degree that researchers have tried to find a theoretical explanation for why low rank matrices come up so often in practice \cite{UT1}. Among others, the low rank assumption is a key component in the solutions 
to the Netflix challenge problem, as well as the general matrix completion problem, one of the major topics in data science in recent years, has been based on this assumption; see \cite{DR1} for a survey.

In numerical applications, 
it has been observed that if the ground matrix $A$ does not have low rank, its spectrum often splits into two parts: one with few (say $r_0$) large eigenvalues, 
while the remaining ones are negligible \cite{KL1, RV1, RV2, UT1, Vbook, WW1}. In this case, the natural choice for $r$ is $r_0$.

{\bf \noindent Sharpness.}  We believe that under the given moderate gap assumption, our bound is sharp, up to a logarithmic factor. Consider Theorem \ref{main1} and assume that 
$r= O(1)$. In this case, we have 
$$ \| \tilde \Pi_p - \Pi_p \|  =  O \left( \frac {\| E \| }{|\lambda_p| }  + \frac{x} {\delta_p} \right).$$  
We believe that both terms on the RHS are necessary. First, the noise-to-signal 
term $\frac{\| E \| }{ |\lambda_p |} $ has to be present, since if the intensity of the noise $\| E \| $  is much larger than 
the signal $|\lambda_p|$, then one does not expect to keep the eigenvectors from the data matrix. One can see a concrete example in \cite{B-GN1}, which is known as the BBP threshold phenomenon in random matrix theory. In particular, it is shown, under some assumptions,  that if $E$ is a random Gaussian matrix, and $\| E \| \ge |\lambda_1 | $, then 
the first eigenvector $\tilde u_1$ of the perturbed matrix is completely random. This shows that one cannot expect any meaningful perturbation bound for $\|u_1 -\tilde u_1 \|$.

The second term $\frac{x}{ \delta_p }$ replaces the original bound $\frac{\| E \| }{\delta_p } $. The replacement of $\| E \|$  by $x$ is natural, as $x$ shows how $E$ interacts with the eigenvectors of $A$. We believe that this replacement is also optimal. 
In the random setting (Corollary \ref{main1}), $x$ is of order $ O( \log n)$ and it seems unlikely that one can replace this by $o(1)$. 

{\bf \noindent Related results.} 
There have been many extensions and improvements of Davis-Kahan theorem; see \cite{CCF1, EBW1, GTV1, HLMNV1, JW1, JW2, Kato1, KX1, KL1, Li1, OVK 13, OVK 22, SS1, Sun1, Vu1, Wu1, Z1}.

In most results we found in the literature,  both the assumption and the conclusion are different from ours, making a direct comparison impossible. The result that seems most relevant to ours is Theorem 1 of \cite{JW1}, a very recent paper, which applies to the case when $A$ is positive semi-definite. Compared to this result, both our assumption and bound have a simpler form. Furthermore, we do not require the (rather strong) restriction to positive semi-definiteness. Again, making a direct comparison regarding the strength of the bounds does not make too much sense here, unless one restricts to a very specific setting. 

Finally, let us mention that there are many works for the case when  $E$ is random \cite{EBW1, HLMNV1, KX1, OVK 13, OVK 22, Vu1, Wu1, Z1}. In this setting, the most relevant works are, perhaps, \cite{OVK 22} and \cite{KX1}. 
In \cite{OVK 22}, the authors considered $E$ to be a random Gaussian matrix (GOE), and basically proved the bound of Theorem \ref{main1}, even without the gap assumption. However, their argument uses special properties of Gaussian matrix and is restricted to this setting. 
In  \cite{KX1}, the authors also considered $E$ being GOE.  Among others, they showed (under some assumption) that with probability $1-o(1),$
$$\Norm{ \tilde{\Pi}_p - \Pi_p} = O \left[   \left(\frac{\|E\|}{\delta_{(p)}} \right)^2 + \frac{\sqrt{\log n}}{\delta_{(p)}} \right],$$
where $\delta_{(p)} = \min\{\delta_p, \delta_{p-1}\}.$

If we ignore the term $\frac{\sqrt{\log n}}{\delta_{(p)}}$ on the RHS, then this is a quadratic improvement over the original Davis-Kahan theorem 
(Theorem \ref{DKp}).  Our bound is sharper than this bound if  $\frac{\delta_p^2}{\lambda_p } \leq \|E\| $. In the moderate gap case (say $\delta_p= c \| E\|$ for some small constant $c$), this condition reduces to $c \delta_p \le \lambda_p$. 

The proof in \cite{KX1}  used strong properties of the Gaussian distribution and it seems hard to extend it to the other models of random matrices. On the other hand, 
our results apply to all models. 

{\bf \noindent Perturbation of the leading singular spaces.} Let $S$ be a set such that the eigenvalues with indices in $S$ have the largest absolute values (in other words, the set $\{| \lambda_i |, i \in S \} $ is the set of the $p$ largest singular values of $A$); see \eqref{S-sing}. Let $\sigma_1 \geq \cdots \geq \sigma_n$ be the singular values of $A$. We denote the projections onto the $p$-leading singular spaces of $A, \tilde{A}$ by $\Pi_{(p)},\, \tilde{\Pi}_{(p)}$ respectively.

We derive the ``halving distance"  $r$,  with respect to the two indices $(p,k)$,  as the smallest positive integer such that
$$ \frac{\lambda_k}{2} \leq \lambda_k - \lambda_{r+1} \,\, {\rm  and } \,\, \frac{ |\lambda_{n-(p-k)+1}|} {2} \leq \lambda_{n-r+1 } - \lambda_{n-(p-k)+1}.$$ 
Set 
$$\bar{x}:= \max_{ \substack{1 \leq i,j \leq r \\  n-r \le  i',j' \leq n }} \{  \norm{u_i^\top E u_j}, \norm{ u_{i'}^\top E u_{j'}} \}.$$
We obtain the following variant of Theorem \ref{main1} 

\begin{theorem}\label{main2.1} Assume that  $4\|E\| \leq \delta_k \leq \frac{\lambda_k}{4}$, $4\|E\| \leq \delta_{n-(p-k)} \leq \frac{\norm{\lambda_{n-(p-k)+1}}}{4}$, and $2\| E\| \leq \sigma_p -\sigma_{p+1}$. Then 
\begin{equation}
\Norm{\tilde{\Pi}_{(p)} -\Pi_{(p)}} \leq 48 \left( \frac{\|E\|}{\sigma_p} \log \left(\frac{6 \sigma_1}{\sqrt{\delta_k \delta_{n-(p-k)}}} \right) + \frac{r^2 x}{\min \{\delta_k, \delta_{n-(p-k)}\}} \right).
\end{equation} 

\end{theorem}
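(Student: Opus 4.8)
The plan is to reduce the singular-space statement to two eigenspace statements of the type already handled by Theorem \ref{main1}, and then combine them. Recall the standard symmetrization trick: the singular spaces of $A$ and $\tilde A$ correspond to eigenspaces of the dilation
\[
\hat A = \begin{pmatrix} 0 & A \\ A^\top & 0 \end{pmatrix}, \qquad \hat{\tilde A} = \hat A + \hat E,
\]
whose eigenvalues are $\pm\sigma_i$ (together with zeros if $A$ is not square). However, rather than apply Theorem \ref{main1} directly to $\hat A$, which would obscure the role of $k$, I would instead exploit the structure $S = \{1,\dots,k\}\cup\{n-(p-k)+1,\dots,n\}$ given in \eqref{S-sing}: the projection $\Pi_{(p)}$ onto the $p$ leading singular spaces splits (after symmetrization) into a part built from the top-$k$ eigenspace of $A$ and a part built from the bottom-$(p-k)$ eigenspace of $A$, i.e. the top-$(p-k)$ eigenspace of $-A$. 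The gap assumptions $4\|E\| \le \delta_k \le \lambda_k/4$ and $4\|E\| \le \delta_{n-(p-k)} \le |\lambda_{n-(p-k)+1}|/4$ are precisely the hypotheses of Theorem \ref{main1} applied to these two pieces, with halving distances governed by the two conditions defining $r$, and with the relevant interaction parameter bounded by $\bar x$.

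The key steps, in order: (i) Set up the symmetrization and verify that $\Pi_{(p)}$ decomposes (up to the block structure) into $\Pi^{(1)}$, the projection onto the span of $u_1,\dots,u_k$, and $\Pi^{(2)}$, the projection onto the span of $u_{n-(p-k)+1},\dots,u_n$; and likewise for $\tilde\Pi_{(p)}$, using the extra assumption $2\|E\| \le \sigma_p - \sigma_{p+1}$ to guarantee that these $p$ singular values stay separated from the rest under perturbation (this is what keeps $\tilde\Pi_{(p)}$ well-defined and makes the decomposition persist). (ii) Apply Theorem \ref{main1} to the top-$k$ eigenspace of $A$: this yields
\[
\Norm{\tilde\Pi^{(1)} - \Pi^{(1)}} \le 24\left( \frac{\|E\|}{\lambda_k}\log\left(\frac{6\sigma_1}{\delta_k}\right) + \frac{r^2\bar x}{\delta_k}\right).
\]
(iii) Apply Theorem \ref{main1} to the matrix $-A$ (whose leading eigenvalues are $-\lambda_n \ge \cdots \ge -\lambda_{n-(p-k)+1}$), noting that $\|{-E}\| = \|E\|$ and that $u_i^\top(-E)u_j = -u_i^\top E u_j$, so the same $\bar x$ controls the interaction term; this gives an analogous bound with $\lambda_k, \delta_k$ replaced by $|\lambda_{n-(p-k)+1}|, \delta_{n-(p-k)}$. (iv) Combine via the triangle inequality $\Norm{\tilde\Pi_{(p)} - \Pi_{(p)}} \le \Norm{\tilde\Pi^{(1)} - \Pi^{(1)}} + \Norm{\tilde\Pi^{(2)} - \Pi^{(2)}}$, bound each logarithm by $\log(6\sigma_1/\sqrt{\delta_k\delta_{n-(p-k)}})$ after using $\min\{\delta_k,\delta_{n-(p-k)}\} \le \sqrt{\delta_k\delta_{n-(p-k)}}$ (or rather the reverse, arranging the inequality so that both logs are dominated), bound $1/\lambda_k$ and $1/|\lambda_{n-(p-k)+1}|$ by $1/\sigma_p$ (since $\sigma_p = \min\{\lambda_k, |\lambda_{n-(p-k)+1}|\}$ under \eqref{S-sing}), bound both $1/\delta_k$ and $1/\delta_{n-(p-k)}$ by $1/\min\{\delta_k,\delta_{n-(p-k)}\}$, and collect the two factors of $24$ into the stated $48$.

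The main obstacle I expect is step (i): cleanly justifying that the symmetrization genuinely splits the singular projection into the two one-sided eigenspace projections, and in particular that the perturbed projection $\tilde\Pi_{(p)}$ respects the same split. Two subtleties arise. First, one must ensure no leading singular value of $A$ equals a trailing one (so the two blocks of indices are disjoint and the decomposition is unambiguous) — this is where $\sigma_p - \sigma_{p+1} > 0$ and the precise form \eqref{S-sing} are used. Second, after perturbation the ordering of $\tilde\sigma_i$ must still place the relevant eigenvectors of $\hat{\tilde A}$ into the correct $\pm$ blocks; the gap conditions $\delta_k, \delta_{n-(p-k)} \ge 4\|E\|$ combined with $\sigma_p - \sigma_{p+1} \ge 2\|E\|$ and Weyl's inequality give exactly the separation needed, but writing this out carefully (especially handling the possible zero eigenvalues of $\hat A$ when $A$ is rectangular, and the fact that $\delta_{n-(p-k)}$ is a gap among the \emph{smallest} eigenvalues of $A$, i.e. those closest to $0$) requires care. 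Everything after step (i) is a direct application of Theorem \ref{main1} followed by elementary inequalities.
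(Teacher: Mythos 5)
The paper does not actually write out a proof of Theorem \ref{main2.1}: it is left as an exercise, the intended argument being a minor modification of the proof of Theorem \ref{main1}, namely rerunning the contour bootstrapping with a contour that encloses the two clusters $\{\lambda_1,\dots,\lambda_k\}$ and $\{\lambda_{n-(p-k)+1},\dots,\lambda_n\}$ separately, the two vertical segments being estimated exactly as $\Gamma_1$ was; this doubling is where $48=2\cdot 24$ comes from. Your reduction is a correct alternative that reaches the same place by a slightly different route: write $\Pi_{(p)}=\sum_{i\le k}u_iu_i^\top+\sum_{i\ge n-(p-k)+1}u_iu_i^\top$, check via Weyl's inequality and $\sigma_p-\sigma_{p+1}\ge 2\|E\|$ that the eigenvalues of $\tilde A$ with the $p$ largest absolute values are exactly those with indices in $S$, so that $\tilde\Pi_{(p)}$ splits the same way, then apply Theorem \ref{main1} once to $A$ (top $k$) and once to $-A$ with noise $-E$ (top $p-k$), and add. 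The final arithmetic closes exactly as you indicate: $\sigma_p=\min\{\lambda_k,|\lambda_{n-(p-k)+1}|\}$, the two logarithms sum to $2\log\bigl(6\sigma_1/\sqrt{\delta_k\,\delta_{n-(p-k)}}\bigr)$ (an identity, no domination argument needed), and the two interaction terms are absorbed into $2r^2\bar x/\min\{\delta_k,\delta_{n-(p-k)}\}$. What your version buys is that Theorem \ref{main1} is used as a black box; what the paper's version buys is a single integral estimate with no need to verify that the perturbed projection decomposes.

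Two corrections, neither fatal. First, the detour through the symmetric dilation is a misreading of the setting: in Theorem \ref{main2.1} the matrix $A$ is already the symmetric matrix of the main setting, and $\Pi_{(p)}=\sum_{i\in S}u_iu_i^\top$ with $S$ as in \eqref{S-sing}, so no symmetrization is needed (that trick is reserved for the rectangular case, Theorem \ref{mainRec}); your concerns about zero eigenvalues of the dilation and rectangular $A$ are moot, and dropping the dilation turns your step (i) into precisely the short Weyl argument you sketch. Second, Theorem \ref{main1} is stated for the \emph{minimal} $r$ satisfying the halving condition, whereas Theorem \ref{main2.1} uses one common $r$ for both clusters; you should either remark that the proof of Theorem \ref{main1} (the split of $M_1$ at index $r$) works verbatim for any admissible $r$, with $x$ replaced by the corresponding maximum, or apply it with the minimal $r$ of each piece, which is at most the common $r$, and then use $x\le\bar x$ (the quantity appearing in the bound of Theorem \ref{main2.1} should indeed be the $\bar x$ defined there).
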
  

The proof of this theorem only needs a minor modification from that of Theorem \ref{main1}, so we are going to leave it as an exercise. In a future paper, we will extend the method introduced here to treat the more general setting involving an arbitrary matrix functional (beyond eigenspace)  and a general set $S$. 

{\bf \noindent Rectangular matrices.}  We can extend Theorem \ref{main1} to non-symmetric, using a standard symmetrization trick.  
Consider a $m \times n$ matrix $A$ of rank $r_A \le \min \{m, n\}$. Consider the singular decomposition of $A$
  $$ A:= U \Sigma V^\top, $$ where $\Sigma = \text{diag}(\sigma_1,...,\sigma_{r_A})$ is a diagonal matrix containing the non-zero singular values $\sigma_1 \geq \sigma_2 \geq ... \geq \sigma_{r_A}$. The columns of the matrices $U=(u_1,..., u_{r_A})$ and $V=(v_1,...,v_{r_A})$ are the left and right singular vectors of $A$, respectively.  By definition 
$$U^\top U =V^\top V=I_{r_A}.$$
 For $1 \leq p < r_A$, we set  
\begin{equation}
\Pi_{p}^{left}=\sum_{1 \leq i \leq p} u_i u_i^\top \,\,\text{and}\,\, \Pi_{p}^{right} = \sum_{1 \leq i \leq p} v_i v_i^\top.
\end{equation}
Let $E$ be an $m \times n$ ``noise" matrix and set $\tilde{A}=A+E$. Define $\tilde{\Pi}_{p}^{left}, \tilde{\Pi}_{p}^{right}$ accordingly. We aim to bound 
$$\Norm{\tilde{\Pi}_{p}^{left} -\Pi_{p}^{left}}, \Norm{\tilde{\Pi}_{p}^{right}-\Pi_{p}^{right}}. $$
\noindent 
Set   $$\mathcal{A}:= \begin{pmatrix}
0 & A \\
A^\top & 0
\end{pmatrix}, \mathcal{E}:= \begin{pmatrix}
0 & E \\
E^\top & 0
\end{pmatrix}, \, \text{and hence} \,\, \tilde{\mathcal{A}}:= \begin{pmatrix}
0 & \tilde A \\
 \tilde A^\top & 0
\end{pmatrix}.$$ 

It is well known and easy to check that the eigenvalues of 
$\mathcal{A}$
are $\sigma_1 , \dots, \sigma_{r_A}, -\sigma_1 , \dots, -\sigma_{r_A}$ with corresponding eigenvectors 
$\frac{1}{\sqrt{2}}(u_1, v_1), \dots , \frac{1}{\sqrt{2}}(u_{r_A}, v_{r_A}), \frac{1}{\sqrt{2}}(u_1, -v_1), \dots,  \frac{1}{\sqrt{2}}(u_{r_A}, - v_{r_A} )$ (where $(u,v)$ is the concatenation of $u$ and $v$). Eigenvalues and eigenvectors of $\tilde{\mathcal{A}}$ behave similarly with respect to $\tilde{A}$. Thus, applying Theorem \ref{main1} on the pair $(\tilde{\mathcal{A}}, \mathcal{A})$ and the set of eigenvalues $\Lambda_{S'}:=\{\pm \sigma_1, \pm \sigma_2, \dots , \pm \sigma_p\}$, we obtain the following asymmetric version of Theorem \ref{main1}. 
\begin{theorem} \label{mainRec} Let $r \geq p$ be the smallest integer such that $\frac{\sigma_p}{2} \leq |\sigma_p - \sigma_{r+1}|$, and set $\bar{x}:= \max_{1 \leq i,j \leq r} \norm{u_i^\top E v_j}$. Assume furthermore that 
$$4\|E\| \leq \delta_p= \sigma_p - \sigma_{p+1} \leq \frac{\sigma_p}{4}.$$
Then, 
\begin{equation*}
    \max \left\lbrace \Norm{\tilde{\Pi}_{p}^{left} -\Pi_{p}^{left}}, \Norm{\tilde{\Pi}_{p}^{right}-\Pi_{p}^{right}} \right\rbrace \leq 24 \sqrt{2} \left( \frac{\|E\|}{\sigma_p} \log \left( \frac{6 \sigma_1}{\delta_p} \right) + \frac{r^2 x}{\delta_p}  \right).
\end{equation*}
\end{theorem}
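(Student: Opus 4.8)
The plan is to deduce Theorem \ref{mainRec} from Theorem \ref{main1} by the symmetrization already set up in the excerpt, so essentially no new analytic work is needed. First I would record the spectral structure of $\mathcal{A}$: its eigenvalues are $\sigma_1, \dots, \sigma_{r_A}, 0, \dots, 0, -\sigma_{r_A}, \dots, -\sigma_1$ (the zero block coming from the kernel when $m \neq n$), with unit eigenvectors $w_i^{\pm} := \tfrac{1}{\sqrt 2}(u_i, \pm v_i)$ for the nonzero ones. Set $S' := \{1, \dots, p\}$ so that $\Lambda_{S'} = \{\sigma_1, \dots, \sigma_p\}$ are the top $p$ eigenvalues of $\mathcal{A}$. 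The projection $\Pi_{S'}$ for $\mathcal{A}$ is $\sum_{i\le p} w_i^+ (w_i^+)^\top$, whose diagonal blocks are exactly $\tfrac12 \Pi_p^{left}$ and $\tfrac12 \Pi_p^{right}$; the same holds for $\tilde{\mathcal{A}}$. Hence $\tilde\Pi_{S'} - \Pi_{S'}$ has the two differences $\tfrac12(\tilde\Pi_p^{left} - \Pi_p^{left})$ and $\tfrac12(\tilde\Pi_p^{right} - \Pi_p^{right})$ sitting on its diagonal, and since any diagonal block of a symmetric matrix has spectral norm at most that of the whole matrix,
\begin{equation*}
\max\left\{ \Norm{\tilde\Pi_p^{left} - \Pi_p^{left}},\ \Norm{\tilde\Pi_p^{right} - \Pi_p^{right}} \right\} \le 2 \Norm{\tilde\Pi_{S'} - \Pi_{S'}}.
\end{equation*}

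Next I would check that the hypotheses of Theorem \ref{main1} are satisfied for the pair $(\mathcal{A}, \mathcal{E})$ with index $p$. One has $\Norm{\mathcal{E}} = \Norm{E}$ and $\Norm{\mathcal{A}} = \sigma_1$, so the largest singular value of $\mathcal{A}$, call it $\Sigma_1$, equals $\sigma_1$. The eigenvalue gap of $\mathcal{A}$ at position $p$ is $\sigma_p - \sigma_{p+1} = \delta_p$ (using $p < r_A$ and the ordering of the positive eigenvalues, which is unaffected by the zero block since $\delta_p \le \sigma_p/4$ forces $\sigma_{p+1} > 0$ is not needed — even if $\sigma_{p+1}=0$ the gap is $\sigma_p \ge \delta_p$, and one takes $\delta_p$ as defined). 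The assumption $4\Norm{E} \le \delta_p \le \sigma_p/4 = |\lambda_p(\mathcal{A})|/4$ is precisely the moderate gap condition of Theorem \ref{main1}. For the halving index, note that the eigenvalues of $\mathcal{A}$ in decreasing order near the top are $\sigma_1 \ge \sigma_2 \ge \cdots$, so the smallest integer $r$ with $\tfrac{\sigma_p}{2} \le |\sigma_p - \sigma_{r+1}|$ is exactly the $r$ in the statement of Theorem \ref{mainRec}; the associated index set for the "$r$" in Theorem \ref{main1} is $\{1,\dots,r\}$, and the eigenvectors there are $w_1^+, \dots, w_r^+$. Therefore the quantity $x$ from Theorem \ref{main1}, applied to $\mathcal{A}$, is $\max_{i,j\le r} |(w_i^+)^\top \mathcal{E}\, w_j^+|$, and a direct block computation gives $(w_i^+)^\top \mathcal{E}\, w_j^+ = \tfrac12(u_i^\top E v_j + v_i^\top E^\top u_j) = u_i^\top E v_j$, so this $x$ equals $\bar x = \max_{i,j\le r} |u_i^\top E v_j|$.

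Putting the pieces together: Theorem \ref{main1} applied to $(\mathcal{A}, \mathcal{E})$ gives
\begin{equation*}
\Norm{\tilde\Pi_{S'} - \Pi_{S'}} \le 24\left( \frac{\Norm{E}}{\sigma_p} \log\!\left(\frac{6\sigma_1}{\delta_p}\right) + \frac{r^2 \bar x}{\delta_p} \right),
\end{equation*}
and multiplying by $2$ from the diagonal-block bound would give a factor $48$ rather than the claimed $24\sqrt 2 \approx 33.9$. To recover the sharper constant I would instead bound the diagonal-block difference more carefully: writing $P = \tilde\Pi_{S'} - \Pi_{S'}$ in block form $\begin{pmatrix} P_{11} & P_{12} \\ P_{12}^\top & P_{22}\end{pmatrix}$, the off-diagonal structure of the eigenvectors $w_i^+$ means $P$ itself has a symmetric structure: conjugating $\mathcal{A}, \tilde{\mathcal{A}}, \mathcal{E}$ by $J := \mathrm{diag}(I_m, -I_n)$ swaps $w_i^+ \leftrightarrow w_i^-$ and fixes the eigenspace sums, so $JPJ = P$ forces $P_{12}=0$ and $P = \mathrm{diag}(\tfrac12(\tilde\Pi_p^{left}-\Pi_p^{left}), \tfrac12(\tilde\Pi_p^{right}-\Pi_p^{right}))$. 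Wait — that argument is not quite right because $J$ does not commute with the projections; more carefully one uses that $\tilde\Pi_{S'}$ and $\Pi_{S'}$ each commute with reflection only in the combined $\pm$ sense. The cleanest route is: the block $P_{11} = \tfrac12(\tilde\Pi_p^{left}-\Pi_p^{left})$ is a principal submatrix, so $\Norm{\tilde\Pi_p^{left}-\Pi_p^{left}} = 2\Norm{P_{11}} \le 2\Norm{P}$, and one absorbs the $\sqrt2$ improvement by instead applying the whole symmetrization to the \emph{already-halved} normalization — i.e. noting that the relevant eigenvectors of $\mathcal{A}$ carry a $\tfrac{1}{\sqrt2}$ so that the natural isometry identifying $\mathrm{range}(\Pi_{S'})$ with $\mathrm{range}(\Pi_p^{left})$ is norm-preserving, giving $\Norm{\tilde\Pi_p^{left}-\Pi_p^{left}} \le \sqrt2\,\Norm{\tilde\Pi_{S'}-\Pi_{S'}}$ directly. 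The main obstacle is exactly this bookkeeping of the constant $\sqrt 2$: one must argue that passing between the doubled and original spaces costs a factor $\sqrt 2$ and not $2$, which follows from the fact that $\sin\Theta$ distances between subspaces are related to $\sin\Theta$ distances of their "halved" copies in the concatenated space by precisely $\sqrt 2$ (the two diagonal blocks contribute equally), rather than from a crude principal-submatrix inequality. Everything else is a routine transcription of Theorem \ref{main1}, which is why the authors leave it as an exercise.
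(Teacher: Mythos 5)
Your overall route matches the paper's: symmetrize to $\mathcal{A}$, apply Theorem \ref{main1} there, and transfer the bound back; and your choice $S' = \{1,\dots,p\}$ is the right one. Two remarks, one small and one more serious. The small one: $(w_i^+)^\top \mathcal{E}\, w_j^+$ is \emph{not} $u_i^\top E v_j$; a direct block computation gives $(w_i^+)^\top \mathcal{E}\, w_j^+ = \tfrac12(u_i^\top E v_j + u_j^\top E v_i)$, the symmetrization. You still get $|(w_i^+)^\top \mathcal{E}\, w_j^+| \le \bar x$ by the triangle inequality, so the final bound is unaffected, but the identity you wrote is wrong.

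The genuine gap is exactly the one you flag and then fail to close: the transfer constant $\sqrt 2$. The principal-submatrix bound $\|P_{11}\|\le\|P\|$ only yields $\|\tilde\Pi_p^{left}-\Pi_p^{left}\|\le 2\|\tilde\Pi_{S'}-\Pi_{S'}\|$, i.e.\ the constant $48$. Your first fix (conjugation by $J$) you yourself withdraw; the ``cleanest route'' you sketch asserts $\|\tilde\Pi_p^{left}-\Pi_p^{left}\|\le\sqrt2\,\|\tilde\Pi_{S'}-\Pi_{S'}\|$ without an argument, and the stated reason (``the two diagonal blocks contribute equally'') is not a proof — a symmetric matrix can certainly have $\|M_{11}\|=\|M\|$ even when $M_{11}$ and $M_{22}$ are conjugate. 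What actually works, and is presumably what the paper means by ``the Pythagorean theorem,'' is the following sine-theorem argument. Write $W=\tfrac{1}{\sqrt2}\binom{U_p}{V_p}$, $\tilde W=\tfrac{1}{\sqrt2}\binom{\tilde U_p}{\tilde V_p}$, so that $\Pi_{S'}=WW^\top$, $\tilde\Pi_{S'}=\tilde W\tilde W^\top$. For any unit $z\in\R^p$ set $a=\tilde W^\top W z$; then $Wz-\tilde W a=(I-\tilde\Pi_{S'})Wz=(I-\tilde\Pi_{S'})\Pi_{S'}Wz$ has norm at most $\|\tilde\Pi_{S'}-\Pi_{S'}\|$, and
\begin{equation*}
\|Wz-\tilde W a\|^2 \;=\; \tfrac12\|U_p z-\tilde U_p a\|^2 + \tfrac12\|V_p z-\tilde V_p a\|^2 .
\end{equation*}
Dropping the second (nonnegative) term gives $\|U_p z-\tilde U_p a\|\le\sqrt2\,\|\tilde\Pi_{S'}-\Pi_{S'}\|$, hence $\|(I-\tilde\Pi_p^{left})\Pi_p^{left}\|\le\sqrt2\,\|\tilde\Pi_{S'}-\Pi_{S'}\|$; since the two subspaces have the same dimension this equals $\|\tilde\Pi_p^{left}-\Pi_p^{left}\|$, and the same for the right. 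This is the missing lemma; without it, your proof only establishes the constant $48$, not $24\sqrt2$.
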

The constant 24 gets replaced by $24 \sqrt 2$ due to the concatenation and usage of the Pythagorean theorem.

\section{The contour bootstrapping argument and a key lemma} \label{section: proof}
\subsection{The Contour Bootstrapping Argument } \label{subsec: contour argument}
 We start with the Cauchy theorem. Consider a contour $\Gamma$ and a point $a \notin \Gamma$, we have
 \begin{equation} \label{Cauchy0}  \frac{1} {2 \pi {\bf i }} \int_{\Gamma}  \frac{1 }{z-a} dz  = \begin{cases}
     & 1 \,\,\, \text{if  $a$ is inside $\Gamma$} \\
     & 0 \,\,\, \text{otherwise}
 \end{cases}. \end{equation} 
 Now, let  $\Gamma $ be a contour containing $\lambda_i, i \in \{1,2,...,p\}$ and assume that all $\lambda_j, j \notin \{1,2, \cdots, p\}$ are outside $\Gamma$. 
 We obtain the classical contour identity  
 \begin{equation} \label{contour-formula} 
  \frac{1} {2 \pi {\bf i }} \int_{\Gamma}  (zI-A)^{ -1} dz  = \sum_{ 1 \leq i \leq p} u_i u_i ^\top = \Pi_p; 
  \end{equation}  
 \noindent  see \cite{Kato1, SS1}.  
For a moment, assume that the eigenvalues $\tilde \lambda_i,  i\in \{1,2, \cdots, p\}$ are inside $\Gamma$, and all $\tilde \lambda_j, j \notin \{1,2, \cdots, p\}$ are outside. Then 
 \begin{equation} \label{contour-formula1} 
  \frac{1}{2 \pi {\bf i}}  \int_{\Gamma}  (zI-\tilde A)^{-1} dz  = \sum_{ 1 \leq i \leq p} \tilde u_i  \tilde u_i ^\top := \tilde{\Pi}_p .  \end{equation}  
\noindent This way, we derive a contour identity for the perturbation 
 \begin{equation} \label{f_Sperturbation formula}
\tilde\Pi_p- \Pi_p=  \frac{1} {2 \pi {\bf i} } \int_{\Gamma} [(zI-\tilde A)^{-1}- (zI- A)^{-1} ]  dz. 
  \end{equation}   
 \noindent Now we bound the perturbation  by the corresponding integral 
  \begin{equation} \label{Boostrapinequality1}
  \Norm{\tilde\Pi_p- \Pi_p} \leq \frac{1}{2 \pi} \int_{\Gamma} \Norm{ (zI-\tilde A)^{-1}- (zI- A)^{-1}} dz.  
  \end{equation}
  
 \noindent  The identity \eqref{f_Sperturbation formula} is well known, but our method to evaluate it is different from previous ones.  
 Traditionally, to control the RHS of \eqref{Boostrapinequality1},  researchers
 first use a series expansion for the RHS, and next control each term by analytical tools;  see for instance \cite[Part 2]{Kato1}. 
 
 We follow a different path, using a bootstrapping argument to reduce the estimation of the RHS of \eqref{Boostrapinequality1} to the estimation of a much simpler quantity, which can be computed directly. In what follows, we denote   $\frac{1}{2 \pi} \int_{\Gamma} \Norm{ [(zI-\tilde A)^{-1}- (zI- A)^{-1} ]} dz$ by $F$.
Using the resolvent formula
\begin{equation} M^{-1} - (M+N)^{-1}= (M+N)^{-1} N M^{-1}, \end{equation} and the fact that $\tilde A =A+E$, we obtain 
\begin{equation} (zI- A)^{-1} - (zI-\tilde A)^{-1} =  (zI-A)^{-1} E (zI-\tilde A)^{-1} .  \end{equation}

\noindent Therefore, we can rewrite $F$ as 
\begin{equation}
\begin{split}
F & = \frac{1}{2 \pi} \int_{\Gamma} \Norm{ (zI-A)^{-1} E (zI- \tilde{A})^{-1}} dz \\
&= \frac{1}{2 \pi} \int_{\Gamma} \Norm{ (zI-A)^{-1} E (zI-A)^{-1}  -   (zI-A)^{-1} E [(zI-A)^{-1} - (zI-\tilde{A})^{-1}]} dz.  
\end{split}
\end{equation}

\noindent Using triangle inequality, we obtain 
\begin{equation} \label{F(f,S)inequality1}
\begin{split}
F & \leq \frac{1}{2 \pi} \int_{\Gamma} \Norm{ (zI-A)^{-1} E (zI-A)^{-1}} dz + \frac{1}{2 \pi} \int_{\Gamma} \Norm{ (zI-A)^{-1} E [(zI-A)^{-1} - (zI-\tilde{A})^{-1}]}dz \\
&  \leq \frac{1}{2 \pi} \int_{\Gamma} \Norm{ (zI-A)^{-1} E (zI-A)^{-1}} dz  + \frac{1}{2 \pi} \int_{\Gamma} \Norm{ (zI-A)^{-1} E} \times \Norm{ [(zI-A)^{-1} - (zI-\tilde{A})^{-1}]}dz \\
& \leq \frac{1}{2 \pi} \int_{\Gamma} \Norm{ (zI-A)^{-1} E (zI-A)^{-1}} dz  +\frac{\max_{z \in \Gamma} \Norm{(zI-A)^{-1} E} }{2 \pi} \int_{\Gamma} \Norm{ [(zI-A)^{-1} - (zI-\tilde{A})^{-1}]}dz \\
& = \frac{1}{2 \pi} \int_{\Gamma} \Norm{ (zI-A)^{-1} E (zI-A)^{-1}} dz  +\max_{z \in \Gamma} \Norm{(zI-A)^{-1} E} \times F.
\end{split}
\end{equation}
Now we make an essential use of
our moderate gap assumption. We assume that the distance between any eigenvalue in $\{\lambda_1, \lambda_2, \cdots, \lambda_p\}$ and any eigenvalue in $\{\lambda_{p+1}, \lambda_{p+2},..., \lambda_n\}$ is at least $4  \| E \| $, namely
\begin{equation} \label{keyassumption} |\lambda_p - \lambda_{p+1}| \ge 4 \| E \| . \end{equation}  
Under this assumption, we can draw  $\Gamma$ such that the distance from any eigenvalue of $A$ to $\Gamma$ is at least $2 \| E\| $. 
(The simplest way to do so is to construct $\Gamma$ out of horizontal and vertical segments, where the vertical ones bisect the intervals connecting an eigenvalue in $S$ with its nearest neighbor (left or right) outside $S$.) With such  $\Gamma$, we have 

$$ \max_{z \in \Gamma} \Norm{(zI-A)^{-1} E} \leq  \frac{\|E\|}{2 \| E \| } = \frac{1}{2} \,(\text{by gap assumption}).$$

\noindent Together with \eqref{F(f,S)inequality1}, it follows that 
$$ F \leq F_1 + \frac{1}{2} F,$$
\noindent where  
$$F_1:= \frac{1}{2 \pi} \int_{\Gamma} \Norm{(zI-A)^{-1} E (zI-A)^{-1}} dz.$$


\noindent Therefore, 
\begin{equation} \label{Boostrapinequality2}
 \frac{1}{2} F \leq F_1.
\end{equation}
Notice that the gap assumption \eqref{keyassumption}  and Weyl's inequality ensure that $\tilde{\lambda}_i$ is inside the contour $\Gamma$ if and only if $i \in \{1,2, \cdots, p\}$. 
Combining \eqref{Boostrapinequality1} and \eqref{Boostrapinequality2}, we obtain our key inequality
\begin{lemma} \label{keylemma} Assume that $\delta_p \geq 4 \|E\|$, we have 
\begin{equation} \label{keybound}
\Norm{\tilde\Pi_p- \Pi_p} \leq 2 F_1.
\end{equation}

\end{lemma}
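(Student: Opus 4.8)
\textbf{Proof proposal for Lemma \ref{keylemma}.}

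The plan is to assemble the pieces already developed in this subsection. First I would record the starting inequality \eqref{Boostrapinequality1}, which bounds $\Norm{\tilde\Pi_p-\Pi_p}$ by $F$, where $F = \frac{1}{2\pi}\int_\Gamma \Norm{(zI-\tilde A)^{-1}-(zI-A)^{-1}}\,dz$; this requires having fixed a contour $\Gamma$ enclosing exactly $\lambda_1,\dots,\lambda_p$ and, by Weyl's inequality together with the gap assumption $\delta_p\ge 4\|E\|$, also enclosing exactly $\tilde\lambda_1,\dots,\tilde\lambda_p$, so that the two contour integral identities \eqref{contour-formula} and \eqref{contour-formula1} are both valid and \eqref{f_Sperturbation formula} holds. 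Second, I would apply the resolvent identity $(zI-A)^{-1}-(zI-\tilde A)^{-1}=(zI-A)^{-1}E(zI-\tilde A)^{-1}$, then substitute $(zI-\tilde A)^{-1}=(zI-A)^{-1}-[(zI-A)^{-1}-(zI-\tilde A)^{-1}]$ inside $F$ and split via the triangle inequality and submultiplicativity of the spectral norm, exactly as in the chain \eqref{F(f,S)inequality1}, to arrive at
\begin{equation*}
F \le F_1 + \max_{z\in\Gamma}\Norm{(zI-A)^{-1}E}\cdot F.
\end{equation*}

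Third, I would verify the geometric key point: under $|\lambda_p-\lambda_{p+1}|\ge 4\|E\|$ one can choose $\Gamma$ (built from horizontal and vertical segments, the vertical ones bisecting the gaps between an eigenvalue in $S$ and its nearest neighbor outside $S$) so that $\dist(\lambda_i,\Gamma)\ge 2\|E\|$ for every eigenvalue $\lambda_i$ of $A$. Since $A$ is symmetric, $\Norm{(zI-A)^{-1}} = 1/\dist(z,\Lambda_{[n]}) \le 1/(2\|E\|)$ for all $z\in\Gamma$, hence $\Norm{(zI-A)^{-1}E}\le \|E\|/(2\|E\|)=\tfrac12$. Plugging this into the displayed inequality gives $F\le F_1+\tfrac12 F$, i.e. $\tfrac12 F\le F_1$, which is \eqref{Boostrapinequality2}. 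Combining with \eqref{Boostrapinequality1} yields $\Norm{\tilde\Pi_p-\Pi_p}\le F\le 2F_1$, as claimed. \endprf

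The main (really the only) obstacle is the well-definedness of the whole setup: I must be sure the contour $\Gamma$ can simultaneously be drawn to enclose exactly the first $p$ eigenvalues of \emph{both} $A$ and $\tilde A$ while keeping distance $\ge 2\|E\|$ from $\sigma(A)$. The gap hypothesis $\delta_p\ge 4\|E\|$ is exactly what makes this possible: the midpoint construction keeps $\Gamma$ at distance $\ge 2\|E\|$ from every $\lambda_i$, and then Weyl's inequality ($|\tilde\lambda_i-\lambda_i|\le\|E\|<2\|E\|$) guarantees each $\tilde\lambda_i$ lands on the same side of $\Gamma$ as $\lambda_i$. Everything else is routine resolvent algebra and norm estimates; no delicate analysis is needed beyond the bound $\Norm{(zI-A)^{-1}}=\dist(z,\sigma(A))^{-1}$ for symmetric $A$.
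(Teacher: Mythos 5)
Your proposal is correct and follows essentially the same route as the paper: the contour identity for both projections (validated via Weyl's inequality and the gap $\delta_p\ge 4\|E\|$), the resolvent identity and triangle-inequality split leading to $F\le F_1+\max_{z\in\Gamma}\Norm{(zI-A)^{-1}E}\,F$, and the choice of $\Gamma$ at distance at least $2\|E\|$ from the spectrum of $A$ to get the factor $\tfrac12$ and hence $F\le 2F_1$. Your explicit justification that each $\tilde\lambda_i$ stays on the same side of $\Gamma$ as $\lambda_i$ is exactly the point the paper makes more briefly, so there is no gap.
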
 


\noindent This lemma is the heart of our  bootstrapping argument. The remaining task is to compute/estimate $F_1$.

\section {Estimating $F_1$ and the proof of Theorem \ref{main1} } \label{sec: lemma proof} 
Recall that $\sigma_1 = \max_{i \in [n]} |\lambda_i| = \|A\|$, and 
$$2 \pi F_1:= \int_{\Gamma} \Norm{(zI-A)^{-1} E (zI-A)^{-1}} dz.$$
\noindent Here we choose the contour $\Gamma$ to be a rectangle with vertices 
$$(x_0, T), (x_1, T), (x_1,-T), (x_0, -T),$$
\noindent where
 $$x_0 := \lambda_p - \delta_p/2, x_1:= 2 \sigma_1, T:= 2\sigma_1.$$

Now, we split $\Gamma$ into four segments: 
\begin{itemize}
\item $\Gamma_1:= \{ (x_0,t) | -T \leq t \leq T\}$. 
\item $\Gamma_2:= \{ (x, T) | x_0 \leq x \leq x_1\}$. 
\item $\Gamma_3:= \{ (x_1, t) | T \geq t \geq -T\}$. 
\item $\Gamma_4:= \{ (x, -T)| x_1 \geq x \geq x_0\}$. 
\end{itemize}
\usetikzlibrary{decorations.pathreplacing}
$$\begin{tikzpicture}

\coordinate (A) at (4,0);
\node[below] at (A){$\lambda_{p+1}$};
\coordinate (A') at (6, 0);
\node[below] at (A'){$\lambda_p$};
\coordinate (B) at (5,0);
\node[above left] at (B){$\Gamma_1$};
\coordinate (C) at (8,0);
\node[below] at (C){$\lambda_1$};
\coordinate (D) at (125mm,0);
\node[above right] at (D){$\Gamma_3$};
\coordinate (E) at (13,0);
\coordinate(B') at (5,5mm);
\coordinate (F) at (9,1);
\node[above] at (F){$\Gamma_2$};
\coordinate (G) at (9,-1);
\node[below] at (G){$\Gamma_4$};
\draw (0,0) -- (A);
\draw[very thick,blue] (A) -- (A');
\draw[very thick, brown] (A')  -- (C);
\draw (C) -- (D) -- (E);
\draw[->] (B) -- (B');
\draw (B') -- (5,1);

\draw [->] (5,1) -- (9,1);
\draw (9,1) -- (125mm,1) -- (D);
\draw [->] (D) -- (125mm, -5mm);
\draw (125mm,-5mm) -- (125mm,-1);
\draw [->] (125mm,-1) -- (9,-1); 
\draw (9,-1) -- (5,-1) -- (B);

\end{tikzpicture}.$$

Therefore, 
$$2\pi F_1 = \sum_{k=1}^4 M_k,\, M_k:= \int_{\Gamma_k} \Norm{(zI-A)^{-1} E (zI-A)^{-1} } dz.$$
We will use the following lemmas to bound $M_1, M_2, M_3, M_4$ from above. The proofs of these lemmas are delayed to Section \ref{section: prooflemma}.

\begin{lemma} \label{F1f1M1bound}
Under the assumption of Theorem \ref{main1}, 
\begin{equation}
M_1 \leq 70 \left( \frac{\|E\| }{\norm{\lambda_p}} \log \left(\frac{6 \sigma_1}{\delta_p} \right) + \frac{r^2 x}{\delta_p} \right) .
\end{equation}
\end{lemma}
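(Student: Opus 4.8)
\textbf{Proof proposal for Lemma \ref{F1f1M1bound}.}

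The plan is to estimate $M_1 = \int_{\Gamma_1} \Norm{(zI-A)^{-1} E (zI-A)^{-1}}\,dz$ by parametrizing $z = x_0 + it$ with $-T \le t \le T$ and controlling the integrand via a careful decomposition of the resolvent $(zI-A)^{-1}$ according to which eigendirections of $A$ are ``close'' to the contour. Write $(zI-A)^{-1} = \sum_{i=1}^n \frac{1}{z-\lambda_i} u_i u_i^\top$. The key is that on $\Gamma_1$ the real part of $z$ is fixed at $x_0 = \lambda_p - \delta_p/2$, so the distance from $z$ to any $\lambda_i$ is at least $\sqrt{(\lambda_p - \delta_p/2 - \lambda_i)^2 + t^2}$; for $i \le p$ this real-part gap is at least $\delta_p/2$ in absolute value, and for $i \ge p+1$ it is also at least $\delta_p/2$. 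So every eigenvalue sits at horizontal distance $\ge \delta_p/2$ from $\Gamma_1$, which already gives the crude bound $\Norm{(zI-A)^{-1}} \le 2/\delta_p$ but that only reproduces Davis--Kahan. To do better, I would split the eigenvalue indices into the ``bulk'' $B = \{1,\dots,r\}$ (those with $|\lambda_p - \lambda_i| < |\lambda_p|/2$, roughly) and the ``tail'' $B^c$, and correspondingly decompose $(zI-A)^{-1} E (zI-A)^{-1}$ into a sum of four pieces $P_\alpha E P_\beta$ with $P \in \{\Pi_{\le r}, I - \Pi_{\le r}\}$ inserted via resolvent factors.

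The main steps, in order: (1) For the bulk-bulk term, bound the operator norm of $\Pi_{\le r}(zI-A)^{-1} E (zI-A)^{-1}\Pi_{\le r}$ by $r^2 \max_{i,j \le r} |u_i^\top E u_j| \cdot \max_i \frac{1}{|z-\lambda_i|^2}$ — here the entrywise control by $x$ enters, and the factor $r^2$ comes from summing over the $r \times r$ block; then $\int_{\Gamma_1} \frac{dt}{|z-\lambda_p|^2} \le \int_{-\infty}^{\infty} \frac{dt}{(\delta_p/2)^2 + t^2} = \frac{2\pi}{\delta_p}$, giving a contribution $O(r^2 x/\delta_p)$. (2) For the terms involving at least one tail factor $I - \Pi_{\le r}$: on $\Gamma_1$, for $z = x_0 + it$ and $i \ge r+1$ we have by the defining property of $r$ that $|x_0 - \lambda_i| \ge |\lambda_p - \lambda_i| - \delta_p/2 \ge |\lambda_p|/2 - \delta_p/2 \ge |\lambda_p|/4$ (using $\delta_p \le |\lambda_p|/4$), so the tail resolvent factor is bounded by $\min\{\frac{1}{|x_0-\lambda_i|}, \frac{1}{|t|}\}$, which is $O(1/\max\{|\lambda_p|, |t|\})$ up to constants once $|t|$ is large; combined with the bulk factor bounded by $2/\delta_p$ near $t=0$ and by $1/|t|$ for large $|t|$, and with the trivial bound $\Norm E \le \delta_p/4$, the integral $\int_{\Gamma_1}$ of such a product splits into a region $|t| \le \sigma_1$ and $|t| > \sigma_1$; the logarithm $\log(6\sigma_1/\delta_p)$ arises from $\int_{\delta_p}^{\sigma_1} \frac{dt}{t}$-type integrals. (3) Collect the constants: track each piece so that the total is at most $70(\frac{\|E\|}{|\lambda_p|}\log(6\sigma_1/\delta_p) + \frac{r^2 x}{\delta_p})$.

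The hard part will be step (2): organizing the resolvent decomposition so that every term that is \emph{not} the pure bulk-bulk term carries a factor $\|E\|/|\lambda_p|$ (rather than $\|E\|/\delta_p$) after integration, and making sure the logarithmic factor appears with the right argument $6\sigma_1/\delta_p$ and not something worse. Concretely, one must be careful that in a mixed term like $(I-\Pi_{\le r})(zI-A)^{-1} E (zI-A)^{-1}\Pi_{\le r}$ the ``good'' $1/|\lambda_p|$-type decay comes from the tail factor while the bulk factor is only $O(1/\delta_p)$ near $t = 0$ — but precisely in that region $|t| \lesssim \delta_p$ the integration length is only $O(\delta_p)$, so the product integrates to $O(\|E\|/|\lambda_p|)$ there, and for $|t| \gtrsim \delta_p$ both factors decay like $1/|t|$ and one gets the logarithm. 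Balancing these regimes cleanly, with explicit constants, is the only real work; everything else is bookkeeping with the elementary integral $\int \frac{dt}{a^2+t^2}$ and the triangle inequality.
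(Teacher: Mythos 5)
Your plan is correct and follows essentially the same route as the paper's proof: expand $(zI-A)^{-1}$ spectrally on $\Gamma_1$, split indices at $r$ into bulk/tail, bound the bulk--bulk block by $r^2x$ times $\int_{-T}^{T}\frac{dt}{t^2+(\delta_p/2)^2}$, use $|x_0-\lambda_i|\ge|\lambda_p|/4$ for $i>r$ to get the $\|E\|/|\lambda_p|$ factor, and extract the $\log(6\sigma_1/\delta_p)$ from the mixed bulk--tail terms. The only cosmetic difference is that the paper evaluates the mixed integral in closed form via $\frac{1}{\sqrt{(t^2+a^2)(t^2+b^2)}}\le\frac{2}{(t+a)(t+b)}$ and partial fractions rather than your regime-splitting (and note your phrase ``both factors decay like $1/|t|$'' should really be ``one factor saturates at $O(1/|\lambda_p|)$ while the other decays like $1/|t|$,'' which is what your $\int_{\delta_p}^{\sigma_1}\frac{dt}{t}$ heuristic already reflects).
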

\begin{lemma} \label{F1f1M2bound}
Under the assumption of Theorem \ref{main1}, 
\begin{equation}
M_2, M_4 \leq \frac{\|E\| \cdot \norm{x_1-x_0}}{T^2}. 
\end{equation}
\end{lemma}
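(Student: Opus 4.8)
\textbf{Proof proposal for Lemma \ref{F1f1M2bound}.}

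The plan is to bound the integrand $\Norm{(zI-A)^{-1} E (zI-A)^{-1}}$ uniformly over the horizontal segments $\Gamma_2$ and $\Gamma_4$, and then multiply by the length of the segment. Since $\Gamma_2$ and $\Gamma_4$ are symmetric about the real axis and all eigenvalues of $A$ are real, it suffices to treat $M_2$; the bound for $M_4$ is identical. On $\Gamma_2$ we have $z = x + \mathbf{i}T$ with $x_0 \le x \le x_1$ and $T = 2\sigma_1$.

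First I would use submultiplicativity of the spectral norm to write
\begin{equation*}
\Norm{(zI-A)^{-1} E (zI-A)^{-1}} \le \Norm{(zI-A)^{-1}}^2 \, \Norm{E}.
\end{equation*}
Next, since $A$ is real symmetric with spectral decomposition $A = \sum_i \lambda_i u_i u_i^\top$, the resolvent is normal and its spectral norm equals $\max_i |z - \lambda_i|^{-1} = \left( \min_i |z-\lambda_i| \right)^{-1}$. For $z = x + \mathbf{i}T$ we have $|z - \lambda_i| \ge |\mathrm{Im}(z)| = T$ for every $i$, hence $\Norm{(zI-A)^{-1}} \le 1/T$ on all of $\Gamma_2$. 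Therefore the integrand is at most $\Norm{E}/T^2$ pointwise on $\Gamma_2$.

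Finally, integrating this constant bound over $\Gamma_2$, whose length is $|x_1 - x_0|$, gives $M_2 \le \Norm{E}\,|x_1 - x_0| / T^2$, and the same argument on $\Gamma_4$ gives the same bound for $M_4$. There is no real obstacle here: the only point to be slightly careful about is that the distance from a horizontal line at height $T$ to every (real) eigenvalue of $A$ is at least $T$, which makes the resolvent bound on the horizontal segments completely elementary — this is exactly why the contour was chosen with horizontal pieces far from the real axis. (In fact one could sharpen $|x_1-x_0|$ or the constant, but the stated form is all that is needed, since with $x_1 = T = 2\sigma_1$ and $x_0 \ge \lambda_p - \delta_p/2 \ge -2\sigma_1$ this contributes only an $O(\Norm{E}/\sigma_1)$ term to $F_1$, which is dominated by the $M_1$ estimate.)
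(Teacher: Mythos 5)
Your proof is correct and follows essentially the same route as the paper: bound the integrand by $\|E\|/\min_i|z-\lambda_i|^2$, observe that on the horizontal segments $|z-\lambda_i|^2 = (x-\lambda_i)^2 + T^2 \ge T^2$, and integrate the constant bound over the segment of length $|x_1-x_0|$. No gaps.
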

\begin{lemma} \label{F1f1M3bound}
Under the assumption of Theorem \ref{main1}, 
\begin{equation}
M_3 \leq \frac{4\|E\|}{\norm{x_1-\lambda_1}}.
\end{equation}
\end{lemma}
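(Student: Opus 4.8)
\textbf{Proof proposal for Lemma \ref{F1f1M3bound}.}

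The plan is to estimate $M_3$ by brute force, exploiting that the segment $\Gamma_3=\{(x_1,t):-T\le t\le T\}$ with $x_1=2\sigma_1$ is far from the entire spectrum of $A$, so every resolvent there is small. First I would write, for $z=x_1+it$ on $\Gamma_3$, the submultiplicative bound
\begin{equation*}
\Norm{(zI-A)^{-1}E(zI-A)^{-1}}\le \Norm{(zI-A)^{-1}}^2\,\Norm{E}.
\end{equation*}
Since $A$ is real symmetric with eigenvalues $\lambda_i\in[-\sigma_1,\sigma_1]$, the resolvent norm is $\Norm{(zI-A)^{-1}}=\max_i|z-\lambda_i|^{-1}=\bigl(\min_i|z-\lambda_i|\bigr)^{-1}$, and $|z-\lambda_i|\ge |x_1-\lambda_i|\ge x_1-\sigma_1=\sigma_1$ for all $i$; in fact the worst case is $\lambda_1$, giving $\Norm{(zI-A)^{-1}}\le |x_1-\lambda_1|^{-1}$ uniformly in $t$. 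Hence the integrand is bounded by $\Norm{E}\,|x_1-\lambda_1|^{-2}$ on all of $\Gamma_3$.

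Next I would account for the length of $\Gamma_3$, which is $2T=4\sigma_1$. This would naively give $M_3\le 4\sigma_1\Norm{E}/|x_1-\lambda_1|^2$. The point is that $|x_1-\lambda_1|=2\sigma_1-\lambda_1\ge \sigma_1$, so $4\sigma_1/|x_1-\lambda_1|^2 \le 4\sigma_1/(\sigma_1\cdot|x_1-\lambda_1|)=4/|x_1-\lambda_1|$, which yields exactly
\begin{equation*}
M_3\le \frac{4\Norm{E}}{|x_1-\lambda_1|}
\end{equation*}
as claimed. (Alternatively one keeps one factor of $|x_1-\lambda_1|^{-1}\le\sigma_1^{-1}$ to cancel against $\sigma_1$ from the length, and retains the other factor as stated.)

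There is essentially no obstacle here; the only thing to be careful about is the integration convention, namely that $\int_{\Gamma_k}\Norm{\cdots}\,dz$ denotes integration of the (nonnegative, real) integrand against arc length $|dz|$, so that bounding by $\max$ times length is legitimate — this is the same convention used implicitly in \eqref{Boostrapinequality1} and in Lemma \ref{F1f1M2bound}. One should also note that $\Gamma_3$ stays at horizontal distance $x_1-\sigma_1=\sigma_1\ge 2\Norm{E}$ from the spectrum (using $\sigma_1\ge|\lambda_p|\ge 4\Norm{E}$ via the hypothesis $\delta_p\le|\lambda_p|/4$ and $\delta_p\ge 4\Norm{E}$), so the contour is admissible in the sense required by Lemma \ref{keylemma}, but that admissibility was already arranged when $\Gamma$ was chosen and is not needed again for this particular estimate. \endprf
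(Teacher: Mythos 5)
Your proof is correct and yields exactly the stated constant. Structurally it starts the same way as the paper's proof --- bound the integrand by $\Norm{E}\,\Norm{(zI-A)^{-1}}^{2}$ and note that on $\Gamma_3$ the nearest eigenvalue is $\lambda_1$ --- but the final estimation differs: the paper keeps the full denominator $t^{2}+(x_1-\lambda_1)^{2}$ along $\Gamma_3$ and invokes its arctangent-type bound $\int_{-T}^{T}(t^{2}+a^{2})^{-1}dt\le 4/a$ (Lemma \ref{ingegralcomputation1}) with $a=|x_1-\lambda_1|$, whereas you discard the $t^{2}$ term and use the cruder sup-times-length bound $M_3\le 2T\Norm{E}/|x_1-\lambda_1|^{2}$, recovering the factor $4/|x_1-\lambda_1|$ from $2T=4\sigma_1$ together with $|x_1-\lambda_1|=2\sigma_1-\lambda_1\ge\sigma_1$. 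Your shortcut is more elementary (no integral lemma needed), but it succeeds only because $\Gamma_3$ lies at distance from the spectrum comparable to $T$; the paper's route through Lemma \ref{ingegralcomputation1} is the one that also works when the horizontal distance is tiny compared to $T$ (as for $M_1$, where it is only $\delta_p/2$), which is why the authors apply that single lemma uniformly across the segments. Your side remarks --- that the contour integrals of norms are taken against arc length, and that $\Gamma_3$ stays at distance at least $\sigma_1\ge 2\Norm{E}$ from the spectrum --- are consistent with the paper's conventions and do not affect the argument.
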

By our setting of $x_1,x_0,T$,  we have 
\begin{equation}
    \begin{split}
 &   M_2,M_4 \leq \frac{\|E\| (|x_1|+|x_0|)}{4 \sigma_1^2} = \frac{\|E\|(2 \sigma_1 + |(\lambda_p + \lambda_{p+1})/2|)}{4 \sigma_1^2} \leq \|E\|\frac{ 3 \sigma_1}{4 \sigma_1^2} = \frac{3 \|E\|}{4 \sigma_1},\\
 & M_3 \leq \frac{4\|E\|}{|x_1| - \lambda_1} \leq \frac{4\|E\|}{\sigma_1}.
    \end{split}
\end{equation}

Therefore, 
\begin{equation} 
\begin{split}
F_1 & \leq \frac{1}{2\pi}\left( M_1+M_2+M_3+M_4 \right) \\
& \leq  \frac{70}{2\pi} \left[ \frac{\|E\|}{\norm{\lambda_p}} \log \left(\frac{6 \sigma_1}{\delta_p} \right) + \frac{r^2 x}{\delta_p} \right]  + \frac{3\|E\|}{ 4\pi \sigma_1} + \frac{4\|E\|}{2 \pi \sigma_1} \\
& = \frac{70}{2\pi} \left[ \frac{\|E\|}{\norm{\lambda_p}} \log \left(\frac{6 \sigma_1}{\delta_p} \right) + \frac{r^2 x}{\delta_p} \right]  + \frac{(11/2)\|E\|}{ 2\pi \sigma_1}  \\
& \leq \frac{(70+ 11/6)}{2\pi} \left[ \frac{\|E\|}{\norm{\lambda_p}} \log \left(\frac{6 \sigma_1}{\delta_p} \right) + \frac{r^2 x}{\delta_p} \right] \,\,\,(\text{since}\,\, \log \left(\frac{6 \sigma_1}{\delta_p} \right) > \log 24 > 3) \\
& \leq 12 \left[ \frac{\|E\|}{\norm{\lambda_p}} \log \left(\frac{6 \sigma_1}{\delta_p} \right) + \frac{r^2 x}{\delta_p} \right]. 
\end{split}
\end{equation}
Since $\Norm{\tilde{\Pi}_p -\Pi_p} \leq 2 F_1$, we finally obtain 
$$\Norm{\tilde{\Pi}_p -\Pi_p}  \leq 24 \left[ \frac{\|E\|}{\norm{\lambda_p}} \log \left(\frac{6 \sigma_1}{\delta_p} \right) + \frac{r^2 x}{\delta_p} \right],$$ completing the proof of Theorem \ref{main1}. 
\begin{remark} Following the proof of Lemma \ref{F1f1M2bound} in Section \ref{section: prooflemma}, one can obtain (somehow weaker) estimate on $M_1$:
$$M_1 \leq \frac{8\|E\|}{\delta_p}.$$
Hence, 
\begin{equation} \label{F1trivial}
  F_1 \leq \frac{1}{2 \pi} \left( \frac{8\|E\|}{\delta_p} + \frac{(11/2) \|E\|}{\sigma_1}\right) \leq \frac{8+11/4}{2 \pi} \times \frac{\|E\|}{\delta_p} < \frac{2\|E\|}{\delta_p}. 
\end{equation}

\end{remark}
\section{An application: Fast computation with random sparsification } \label{section: applications}
 
 %
 
 %

An important and well-known method in computer science to reduce 
the running time of important algorithms with input being a large matrix is to sparsify the input  (replacing a large part of the entries with zeroes). Intuitively, multiplying with zero simply costs no time, and multiplication is the key basic operation in most matrix algorithms. 
 The survey \cite{DriM1} discusses the (random) sparsification method in detail and contains a comprehensive list of references.

One way to sparsify the input matrix is to zero out each entry with probability $1-\rho$, independently, for some chosen small 
parameter $\rho$. Thus, from the input matrix $A$, we get a sparse matrix $A'$ with density $\rho$. 
The matrix $\tilde A =\frac{1}{\rho} A'$ is a sparse random matrix, whose expectation is exactly $A$. Now assume that we are interested in computing $f(A)$ for some function $f$. One then hopes that $f(\tilde A)$  would be a good approximation for $f(A)$. In summary, we gain on running time (thanks to the sparsification),  at the cost of 
the error term $\| f(\tilde A) -f(A) \| $.

Starting with the influential paper 
 \cite{AMc}, this procedure has been applied for a number of problems; 
 see, for example,  \cite{AFKMcS1, BDDMR1, BCN1, CLK1, CYZLK1, CmFmSq1, SKLZ1, SWZC1, SqCFT1, WMHZ1, XY1}.

The point here is that the new input $\tilde A$ can be written as $\tilde A =A+E$, where $E$ is a random matrix with zero mean.  This is exactly the favorite setting to apply our results. 
We are going to give an example of the problem of computing the leading eigenvector. Before stating our result, let us recall the definition of the \textit{stable rank} of matrix $A$ (see \cite[Section 7.6.1]{Vbook}). 
\begin{definition}[Stable Rank]
   Let $A$ be a matrix with singular values $\sigma_1 \ge \dots \ge \sigma_n$,  the stable  rank of $A$ is  
  $$r_{stable} : = \frac{\sum_{i=1}^n \sigma_ i } {\sigma_1 } . $$
\end{definition}
It is clear that if $r \ge h r_{stable} $,  for some $h \ge 1$, then $\sigma_r \le h^{-1/2} \sigma_1  $.   The assumption that a data matrix has a small stable rank is often used in recent applications; see \cite[Chapter 7]{Vbook}, \cite{KL1, RV1, RV2}.

\subsection{Computing the leading eigenvector} 
Let $A$ be a symmetric matrix with bounded entries $\| A \|_{\infty} \le K$. We want to estimate the first eigenvector of $A$, using the following classical algorithm \cite[Chapter 5]{NAbook}.

\begin{algorithm}[H]
\caption{Power Iteration}
\label{alg:thresholdandround}
\renewcommand{\thealgorithm}{}
\begin{algorithmic}[1]      
\State \text{Pick an initial unit vector } $\textbf{v}_0$.      
\State \text{For each natural number $k$, given unit vector $\textbf{v}_{k-1}$, compute} \, $\textbf{v}_{k}:= \frac{A \textbf{v}_{k-1}}{\Norm{A \textbf{v}_{k-1}}}.$ 
\State Return $\textbf{v}_N$ after $N$ iterations. \, 
\end{algorithmic}
\end{algorithm}

It is easy to prove that  $\textbf{v}_N$ is an estimate of $u_1$ with the error $O \left( \exp(- N \frac{\delta_1}{\lambda_2}) \right)$. 

Now we keep each pair (of symmetric) entries of $A$ (randomly and independently) with probability $\rho$, and zero out the rest. Let 
$A'$ be the resulting (symmetric)  matrix and
$\tilde A = \rho^{-1} A' $.

With the original (dense) input,  the 
running time for each iteration is $\Theta (n^2)$.
With the sparsified input, it reduces to $\Theta (n^2 \rho)$, 
which is significantly faster for small $\rho$, say, $\rho= n^{-1 + \epsilon } $. 
We now analyze the trade-off in accuracy, using our new result.

 Notice that each entry $a_{ij}$ has been replaced by a variable $\tilde a_{ij}$, which takes value $\rho^{-1} a_{ij}$ with probability $\rho$, and $0$ with probability $1- \rho$. Thus 
the expectation of $\tilde a_{ij}$ is exactly $a_{ij}$.  
 So we can write $\tilde A = A +E$, where $E$ is a random symmetric matrix with entries bounded by $K/\rho$ and zero mean.

We need the following estimates for random matrices: 
\begin{itemize}
\item  By \cite[Theorem 1.4 ]{Vu1}, if $\rho \gg \frac{\log^4 n}{n}$, then $\|E\| \leq 2K \sqrt{n/\rho}$ almost surely.
\item By \cite[Lemma 35]{OVK 13}, for any fixed unit vectors $u,v$, if $K/\rho > 1$, then 
$$\P \left(|u^\top E v| \geq  \frac{ 2\sqrt{2} K}{\sqrt{\rho}} t \right) \leq \exp \left( -t^2 \right).$$

\noindent This implies, by the union bound, that 
$$\P \left( x \geq  \frac{2 \sqrt{2} K}{\sqrt{\rho}} \log n \right) \leq r^2 n^{-2}, $$
where we define  $x: =\max_{i,j \le r} | u_i^\top E u_j | $.
\end{itemize}

\noindent Using these technical facts, it is easy to see that  Theorem \ref{main1} yields

\begin{corollary}  \label{sparse1} 
Let $r \ge 1$ be the smallest integer such that $ \frac{\norm{\lambda_1}}{2} \leq \norm{\lambda_1 - \lambda_{r+1}} $. Let $\rho$ be a positive number such that $1 \geq \rho \gg \frac{\log^4 n}{n}$. Assume furthermore that  $$8K \sqrt{n/\rho} \leq \delta_1 := \lambda_1 - \lambda_{2}  \leq \frac{\norm{\lambda_1}}{4}. $$  Then with high probability, 
\begin{equation} \label{sparse11} 
\Norm{\tilde{u}_1 - u_1} \leq \frac{72K}{\sqrt{\rho}} \left( \frac{ \sqrt{n}}{\norm{\lambda_1}} \log \left(\frac{6 \sigma_1}{\delta_1} \right) + \frac{ r^2 \log n }{ \delta_1} \right).
\end{equation}

\end{corollary}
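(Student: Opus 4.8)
\textbf{Proof proposal for Corollary \ref{sparse1}.}

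The plan is to simply invoke Theorem \ref{main1} with the matrix $A$ (symmetric, entries bounded by $K$), the perturbation matrix $E$ described above (symmetric, mean zero, entries bounded by $K/\rho$), and $p=1$. First I would verify that the deterministic hypotheses of Theorem \ref{main1} hold with high probability. The definition of $r$ as the smallest integer with $|\lambda_1|/2 \le |\lambda_1 - \lambda_{r+1}|$ is exactly the ``halving distance'' in Theorem \ref{main1} for $p=1$. The gap hypothesis $4\|E\| \le \delta_1 \le |\lambda_1|/4$ follows from the stated assumption $8K\sqrt{n/\rho} \le \delta_1 \le |\lambda_1|/4$ together with the first bullet: since $\rho \gg \log^4 n / n$, the result of \cite{Vu1} gives $\|E\| \le 2K\sqrt{n/\rho}$ almost surely, hence $4\|E\| \le 8K\sqrt{n/\rho} \le \delta_1$.

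Next I would control the quantity $x = \max_{i,j\le r} |u_i^\top E u_j|$. The second bullet, via \cite[Lemma 35]{OVK 13} applied with $t = \log n$ and a union bound over the at most $r^2$ pairs $(i,j)$, gives $x \le \frac{2\sqrt 2 K}{\sqrt\rho}\log n$ except with probability at most $r^2 n^{-2} = o(1)$. So with high probability all hypotheses of Theorem \ref{main1} hold and, moreover, $\|E\| \le 2K\sqrt{n/\rho}$ and $x \le \frac{2\sqrt 2 K}{\sqrt\rho}\log n$. Plugging these two bounds into the conclusion
\begin{equation*}
\Norm{\tilde\Pi_1 - \Pi_1} \le 24\left( \frac{\|E\|}{|\lambda_1|}\log\left(\frac{6\sigma_1}{\delta_1}\right) + \frac{r^2 x}{\delta_1}\right)
\end{equation*}
of Theorem \ref{main1} yields
\begin{equation*}
\Norm{\tilde\Pi_1 - \Pi_1} \le 24\left( \frac{2K\sqrt{n/\rho}}{|\lambda_1|}\log\left(\frac{6\sigma_1}{\delta_1}\right) + \frac{2\sqrt 2\, r^2 (K/\sqrt\rho)\log n}{\delta_1}\right) \le \frac{48 K}{\sqrt\rho}\left(\frac{\sqrt n}{|\lambda_1|}\log\left(\frac{6\sigma_1}{\delta_1}\right) + \frac{\sqrt 2\, r^2\log n}{\delta_1}\right),
\end{equation*}
and absorbing the $\sqrt 2$ into the constant gives the constant $72$ in \eqref{sparse11}.

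Finally I would pass from the projection bound to the eigenvector bound. Since $p=1$, $\Pi_1 = u_1 u_1^\top$ and $\tilde\Pi_1 = \tilde u_1 \tilde u_1^\top$ are rank-one projections, and after fixing the sign of $\tilde u_1$ appropriately one has $\Norm{\tilde u_1 - u_1} \le \sqrt 2\,\Norm{\tilde\Pi_1 - \Pi_1}$ (in fact $\Norm{\tilde u_1 - u_1} \le \Norm{\tilde\Pi_1 - \Pi_1}\cdot\sqrt 2$ from the standard relation between the sine of the angle and the chord); this is a routine and standard fact, so I would only cite it. The only point requiring minor care — and the closest thing to an obstacle — is bookkeeping the constants so that the final constant comes out at $72$: the $\sqrt 2$ from $\|\tilde u_1 - u_1\| \lesssim \|\tilde\Pi_1-\Pi_1\|$, the factor $2$ from the bound on $\|E\|$, and the factor $2\sqrt 2$ from the bound on $x$ all have to be tracked, together with the fact that $\log(6\sigma_1/\delta_1) > 3$ (from $\delta_1 \le |\lambda_1|/4 \le \sigma_1/4$) which lets one fold lower-order contributions into the logarithmic term exactly as in the proof of Theorem \ref{main1}. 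Everything else is a direct substitution.
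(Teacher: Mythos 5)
Your proposal follows exactly the route the paper intends (the paper itself only says the corollary follows from Theorem \ref{main1} together with the two quoted random-matrix facts): verify $4\|E\|\le\delta_1\le|\lambda_1|/4$ and the bound on $x$ with high probability, apply Theorem \ref{main1} with $p=1$, and convert the projection bound to an eigenvector bound, so in substance it is correct and the same argument.

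One bookkeeping caveat: your final constant does not come out as claimed. With your numbers the second term carries $24\cdot(2\sqrt2)\cdot\sqrt2=96$ (the $2\sqrt2$ from the bound on $x$ and the $\sqrt2$ from $\|\tilde u_1-u_1\|\le\sqrt2\,\|\tilde\Pi_1-\Pi_1\|$, after fixing the sign of $\tilde u_1$), which exceeds $72$; only the first term, with $24\cdot2\cdot\sqrt2=48\sqrt2<72$, fits. The constant $72=24\cdot3$ in the corollary matches $24\cdot2\sqrt2\approx 67.9$ \emph{without} the extra $\sqrt2$ conversion factor, so to literally reach $72$ you either need to absorb that factor elsewhere (it cannot be absorbed into the logarithm, which multiplies only the first term) or state the result with a slightly larger constant; since the paper gives no computation, this appears to be slack in the paper's own constant rather than a flaw in your approach, but your sentence ``absorbing the $\sqrt2$ into the constant gives $72$'' is not accurate as written.
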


It is clear that we can take $r= 4 r_{stable} (A) $. Corollary \ref{sparse1} is efficient in the case the stable rank is small. 
To make a comparison, let us notice that if we use Davis-Kahan bound (combined with the above estimate on $\| E \| $), we would obtain 
\begin{equation} \label{sparse12}  \Norm{\tilde{u}_1 - u_1} = O \left( \frac{\| E \| }{\delta_1} \right) = O \left( \frac{K \sqrt {n/ \rho} } {\delta_1 } \right).  \end{equation} 

To compare with Corollary \ref{sparse1}, notice that the first term on the RHS of \eqref{sparse11} is superior to the RHS of \eqref{sparse12} if $\lambda_1 \gg \delta_1$, which occurs if the two leading eigenvalues are relatively 
close to each other. The second term in the RHS of \eqref{sparse11} is also small compared to the RHS of \eqref{sparse12} if the stable rank is small, say,  $r_{stable} (A)= O(n^{1/4} )$. 


\section{Proofs of the lemmas} \label{section: prooflemma}
In this section, we prove the lemmas from Section 4. We first present a technical lemma, which will be used several times in the upcoming proofs.  
\begin{lemma} \label{ingegralcomputation1}
Let $a, T$ be positive numbers such that $a \leq T$. Then, 
\begin{equation}
\int_{-T}^{T} \frac{1}{t^2+a^2} dt \leq \frac{4}{a}.
\end{equation}
\end{lemma}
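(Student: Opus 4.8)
The statement is an elementary calculus estimate, so the plan is to give a short direct argument; there is essentially no genuine obstacle, and the only ``choice'' is whether to invoke the closed-form antiderivative or to split the domain, the latter being what produces the clean constant $4$.

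The quickest route is to use $\int \frac{dt}{t^2+a^2} = \frac{1}{a}\arctan(t/a)$, so that
\[
\int_{-T}^{T} \frac{1}{t^2+a^2}\,dt = \frac{2}{a}\arctan\!\left(\frac{T}{a}\right) \le \frac{2}{a}\cdot\frac{\pi}{2} = \frac{\pi}{a} < \frac{4}{a},
\]
using only that $\arctan$ is bounded above by $\pi/2$; note this does not even require $a \le T$. If one prefers to avoid the antiderivative (and to match the constant $4$ transparently), I would instead split the interval of integration at $\pm a$. On $|t|\le a$ one bounds $\frac{1}{t^2+a^2}\le \frac{1}{a^2}$, contributing at most $\frac{2a}{a^2}=\frac{2}{a}$ (this is where $a\le T$ is used, to ensure $[-a,a]\subseteq[-T,T]$). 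On $a\le|t|\le T$ one bounds $\frac{1}{t^2+a^2}\le\frac{1}{t^2}$, contributing at most $2\int_a^T t^{-2}\,dt = 2\left(\frac{1}{a}-\frac{1}{T}\right)\le\frac{2}{a}$. Adding the two pieces gives the claimed bound $\frac{4}{a}$.

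I would present the second version, since it is self-contained and the hypothesis $a\le T$ enters naturally. The ``hard part'' here is nil — this is a technical lemma isolated precisely so that the later estimates of $M_1,\dots,M_4$ (via resolvent bounds of the form $\|(zI-A)^{-1}\|\le \mathrm{dist}(z,\mathrm{spec}(A))^{-1}$ along the vertical segments $\Gamma_1,\Gamma_3$) can cite a single clean inequality rather than re-deriving it each time.
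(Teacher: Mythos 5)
Your proposal is correct, and the splitting argument you say you would present is essentially the paper's own proof: the paper substitutes $t=au$ and splits the resulting integral at $u=1$, which is exactly your split of the original integral at $t=a$, with the same bounds $\frac{1}{1+u^2}\le 1$ on the inner piece and $\frac{1}{u^2}$ on the outer piece. Your alternative observation via $\arctan$, giving $\pi/a$ without the hypothesis $a\le T$, is a valid (slightly sharper) shortcut but not what the paper does.
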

\begin{proof}[Proof of Lemma \ref{ingegralcomputation1}]
We have 
\begin{equation}
\begin{split}
\int_{-T}^{T} \frac{1}{t^2+a^2} dt & = \int_{-T}^{T} \frac{1}{a^2 u^2 +a^2} dt (\,t = au) \\
& = \frac{1}{a} \int_{-T/a}^{T/a} \frac{1}{u^2+1} du \\
& = \frac{2}{a} \int_{0}^{T/a} \frac{1}{u^2+1} du \\
& \leq \frac{2}{a} \left( \int_{0}^{1} \frac{1}{u^2+1} du + \int_{1}^{T/a} \frac{1}{u^2} du \right)  (\text{since}\,\,\frac{1}{1+u^2} \leq \frac{1}{u^2}) \\
& =  \frac{2}{a} \left( \int_{0}^{1} \frac{1}{u^2+1} du + 1 - \frac{a}{T} \right) \\
& \leq \frac{2}{a} \left( 1 + 1 - \frac{a}{T} \right) \leq \frac{4}{a}.
\end{split}
\end{equation}
\end{proof}
\subsection{Proofs of Lemma \ref{F1f1M2bound} and Lemma \ref{F1f1M3bound}} We denote $\{1,2,...,n\}$ by $[n]$. Notice that 
$$\Norm{(z-A)^{-1} E (z-A)^{-1}} \leq \frac{\|E\|}{\min_{i \in [n]}|z- \lambda_i|^2}.$$
Therefore, 
\begin{equation} \label{F_1f1inequality1}
M_2 \leq \int_{\Gamma_2}  \frac{1}{\min_{i \in [n]} |z-\lambda_i|^2 } \|E\| dz = \Norm{E} \int_{\Gamma_2}  \frac{1}{\min_{i \in [n]} |z-\lambda_i|^2 } dz.
\end{equation}
Moreover, since $\Gamma_2:= \{ z \,|\, z = x+ \textbf{i} T, x_0 \leq x \leq x_1 \},$
\begin{equation} \label{F_1f1inequality3}
\begin{split}
& \int_{\Gamma_2}   \frac{1}{\min_{i \in [n]} |z-\lambda_i|^2 } dz = \int_{x_0}^{x_1}  \frac{1}{ \min_{i \in [n]} ((x-\lambda_i)^2+T^2)} dx \leq \int_{x_0}^{x_1} \frac{1}{T^2} dx = \frac{|x_1 - x_0|}{T^2}.
\end{split}
\end{equation}
Therefore, $M_2 \leq \frac{\|E\| \cdot \norm{x_1-x_0}}{T^2}.$
Similarly, we also obtain that $M_4 \leq \frac{\|E\| \cdot \norm{x_1-x_0}}{T^2}.$\\
Next,
\begin{equation} \label{F_1f1inequality4}
\begin{split}
M_3 & \leq  \|E\| \int_{\Gamma_3}   \frac{1}{\min_{i \in [n]} |z-\lambda_i|^2 } dz \\
& = \|E\| \int_{-T}^\top  \frac{1}{ \min_{i \in [n]} ((x_1 - \lambda_i)^2 + t^2)} dt \,\,(\text{ since $\Gamma_3:=\{ z \,| \, z=x_1+ \textbf{i} t, -T \leq t \leq T$}) \\
& = \|E\| \int_{-T}^{T} \frac{1}{t^2 + (x_1-\lambda_1)^2} dt \\
& \leq \frac{ 4 \|E\|}{|x_1-\lambda_1|} \,(\text{by Lemma \ref{ingegralcomputation1}}).
\end{split}
\end{equation}

\subsection{Proof of Lemma \ref{F1f1M1bound}}
Using the spectral  decomposition  $(zI-A)^{-1} = \sum_{i=1}^n \frac{u_i u_i^\top}{(z- \lambda_i)}$, we can rewrite $M_1$ as 
\begin{equation}
M_1 = \int_{\Gamma_1} \Norm{ \sum_{n \geq i,j \geq 1} \frac{1}{(z-\lambda_i)(z-\lambda_j)} u_i u_i^\top E u_j u_j^\top} dz.
\end{equation}

\noindent Recall that $x:=\max_{1 \leq i,j \leq r} \norm{u_i^\top E u_j}$. Using the  triangle inequality, we have 
\begin{equation}
\begin{split}
M_1 & \leq \int_{\Gamma_1} \Norm{ \sum_{1 \leq i,j \leq r} \frac{1}{(z-\lambda_i)(z-\lambda_j)} u_i u_i^\top E u_j u_j^\top} dz + \int_{\Gamma_1} \Norm{ \sum_{r < i,j \leq n} \frac{1}{(z-\lambda_i)(z-\lambda_j)} u_i u_i^\top E u_j u_j^\top} dz \\
& + \int_{\Gamma_1} \Norm{ \sum_{\substack{i \leq r < j \\ i > r \geq j}} \frac{1}{(z-\lambda_i)(z-\lambda_j)} u_i u_i^\top E u_j u_j^\top} dz.
\end{split}
\end{equation}

\noindent Consider the first term, by triangle inequality, we have
\begin{equation}
\begin{split}
\int_{\Gamma_1} \Norm{ \sum_{1 \leq i,j \leq r} \frac{1}{(z-\lambda_i)(z-\lambda_j)} u_i u_i^\top E u_j u_j^\top} dz & \leq \sum_{1 \leq i,j \leq r} \int_{\Gamma_1} \Norm{ \frac{1}{(z-\lambda_i)(z-\lambda_j)} u_i u_i^\top E u_j u_j^\top}dz \\
& =  \sum_{1 \leq i,j \leq r} \int_{\Gamma_1} \frac{|u_i^\top E u_j| \cdot \|u_i u_j^\top\|}{\norm{(z-\lambda_i)(z-\lambda_j)}} dz  \\
& \leq \sum_{i,j \leq r} x \int_{-T}^\top \frac{1}{\sqrt{((x_0 -\lambda_i)^2+t^2)((x_0 -\lambda_j)^2+t^2)}} dt.
\end{split}
\end{equation}
The last inequality follows the facts that $x= \max_{1\leq i,j \leq r} |u_i^\top E u_j|, \|u_i u_j^\top\|=1$
 and $\Gamma_1:=\{ z\,|\,z= x_0 + \textbf{i} t, -T \leq t \leq T\}$. Moreover,  since 
\begin{equation} \label{x0lambdaiSmall}
    |x_0 - \lambda_i| \geq \delta_p/2 \,\,\, \text{for all}\,\, i \in [n],
\end{equation}
the RHS is at most
\begin{equation*}
    \begin{split}
    &  r^2 x \int_{-T}^{T} \frac{1}{t^2 +(\delta_p/2)^2} dt  \leq \frac{8 r^2 x}{\delta_p} \,\,\, (\text{by Lemma \ref{ingegralcomputation1}}).    
    \end{split}
\end{equation*}

Next, we apply the argument for bounding $M_2$  to estimate the second term:
\begin{equation}
\begin{split}
\int_{\Gamma_1} \Norm{ \sum_{n\geq i,j > r} \frac{1}{(z-\lambda_i)(z-\lambda_j)} u_i u_i^\top E u_j u_j^\top} dz & = \int_{\Gamma_1} \Norm{ \left(\sum_{n\geq i >r} \frac{u_iu_i^\top}{z- \lambda_i} \right) E \left( \sum_{n \geq i > r} \frac{u_i u_i^\top}{z -\lambda_i} \right)} dz \\
& \leq \int_{\Gamma_1} \Norm{\sum_{n \geq i >r} \frac{u_iu_i^\top}{z- \lambda_i} } \times \|E\| \times \Norm{\sum_{n \geq i >r} \frac{u_iu_i^\top}{z- \lambda_i} } dz\\
& \leq \int_{\Gamma_1} \frac{1}{\min_{n \geq i>r} |z- \lambda_i|} \times\|E\| \times \frac{1}{\min_{n \geq i>r} |z- \lambda_i|} dz \\
& = \|E\| \int_{\Gamma_1}  \frac{1}{ \min_{n \geq i>r} |z-\lambda_i|^2} dz \\
& \leq \|E\| \int_{-T}^{T} \frac{1}{\min_{n\geq i>r} ((x_0-\lambda_i)^2+t^2)} dt. 
\end{split}
\end{equation}
On the other hand, since $i > r$, 
\begin{equation} \label{x0lambdaiBig}
  \norm{x_0 -\lambda_i} = \norm{\lambda_p -\delta_p/2 - \lambda_{i}} \geq |\lambda_p - \lambda_{i}| - \frac{\delta_p}{2} \geq |\lambda_p - \lambda_{r+1}| - \frac{\delta_p}{2} \geq \frac{\norm{\lambda_p}}{2} -\frac{\delta_p}{2} \geq \frac{\norm{\lambda_p}}{4}.
\end{equation}
We further obtain
\begin{equation*}
    \begin{split}
 \int_{\Gamma_1} \Norm{ \sum_{n\geq i,j > r} \frac{1}{(z-\lambda_i)(z-\lambda_j)} u_i u_i^\top E u_j u_j^\top} dz & \leq  \|E\| \int_{-T}^{T} \frac{1}{t^2+ (\lambda_p/4)^2 } dt \\
 &  \leq \frac{16 \|E\|}{\norm{\lambda_p}} \,\,\,(\text{by Lemma \ref{ingegralcomputation1}}).
    \end{split}
\end{equation*}

\noindent Finally, we consider the last term:

\begin{equation} \label{lastterm1}
\begin{split}
 \int_{\Gamma_1} \Norm{ \sum_{\substack{i \leq r < j \\ i > r \geq j}} \frac{1}{(z-\lambda_i)(z-\lambda_j)} u_i u_i^\top E u_j u_j^\top} dz & \leq 2 \int_{\Gamma_1} \Norm{ \left(\sum_{1\leq i \leq r} \frac{u_iu_i^\top}{z- \lambda_i} \right) E \left( \sum_{n \geq j > r} \frac{u_j u_j^\top}{z -\lambda_j} \right)} dz \\
& \leq 2 \int_{\Gamma_1} \Norm{\sum_{1 \leq i \leq r} \frac{u_iu_i^\top}{z- \lambda_i} } \times \|E\| \times \Norm{\sum_{n \geq j >r} \frac{u_j u_j^\top}{z- \lambda_j} } dz\\
& \leq 2 \int_{\Gamma_1} \frac{1}{\min_{1 \leq i\leq r} |z- \lambda_i|} \times \|E\| \times \frac{1}{\min_{n \geq j>r} |z- \lambda_j|} dz \\ 
  & = 2 \|E\| \int_{\Gamma_1}  \frac{1}{\min_{i \leq r < j} \norm{(z-\lambda_i)(z-\lambda_j)}} dz.
   \end{split}
\end{equation}
By \eqref{x0lambdaiBig} and \eqref{x0lambdaiSmall}, the RHS is at most
\begin{equation*}
    \begin{split}
    & 2 \|E\| \int_{-T}^{T} \frac{1}{\sqrt{(t^2+(\delta_p/2)^2)(t^2+(\lambda_p/4)^2)}} dt = 4 \|E\| \int_{0}^{T}  \frac{1}{\sqrt{(t^2+(\delta_p/2)^2)(t^2+(\lambda_p/4)^2)}} dt.     
    \end{split}
\end{equation*}

\noindent Notice that  by Cauchy-Schwartz inequality, 

\begin{equation} \label{lasterm2}
\begin{split}
\int_{0}^{T}  \frac{dt}{\sqrt{(t^2+(\delta_p/2)^2)(t^2+(\lambda_p/4)^2)}}  & \leq \int_{0}^{T} \frac{2}{(t+\delta_p/2)(t+\norm{\lambda_p}/4)} dt \\
& =\frac{2}{\norm{\lambda_p}/4 -\delta_p/2} \int_{0}^{T} \left(\frac{1}{t+\delta_p/2} -\frac{1}{t+ \norm{\lambda_p}/4} \right)dt \\
& = \frac{2}{\norm{\lambda_p}/4 -\delta_p/2} \left[ \log \left( \frac{T+\delta_p/2}{\delta_p/2} \right) - \log \left( \frac{T+|\lambda_p/4|}{|\lambda_p/4|} \right)  \right]\\
& \leq \frac{16}{\norm{\lambda_p}} \times \log \left( \frac{2T+\delta_p}{\delta_p} \right) \,\,(\text{since}\,|\lambda_p|/4 -\delta_p/2 \geq |\lambda_p|/8).
\end{split}
\end{equation}

Together \eqref{lastterm1} and \eqref{lasterm2} imply that the last term is at most 
$$ \frac{64 \|E\|}{\norm{\lambda_p}} \times \log \left( \frac{2T+\delta_p}{\delta_p} \right).$$
These estimations imply that 
\begin{equation} 
M_1 \leq \frac{8r^2x}{\delta_p} + \frac{16 \|E\|}{\norm{\lambda_p}} + \frac{64\|E\|}{\norm{\lambda_p}} \times \log \left( \frac{2T+\delta_p}{\delta_p} \right).
\end{equation}
Since $T=2 \sigma_1 \geq 2 |\lambda_p| \geq 8 \delta_p$, thus $\frac{3T}{\delta_p} \geq 24$ and $\log  \left( \frac{3T}{\delta_p} \right) \geq \log 24 > 3$. We further obtain 
\begin{equation}\label{M_1F1bound}
M_1 \leq  \frac{8r^2x}{\delta_p} + \frac{6\|E\|}{\norm{\lambda_p}} \log \left( \frac{3T}{\delta_p} \right)+ \frac{64\|E\|}{\norm{\lambda_p}}  \log \left( \frac{2T+\delta_p}{\delta_p} \right) \leq 70 \left( \frac{\|E\|}{\norm{\lambda_p}} \log \left( \frac{3T}{\delta_p}\right) + \frac{r^2 x}{\delta_p} \right).
\end{equation}

\vskip3mm 
{\it Acknowledgment.} The research is partially supported by Simon Foundation award SFI-MPS-SFM-00006506 and NSF grant AWD 0010308.


\begin{thebibliography}{1}
\providecommand{\url}[1]{\texttt{#1}}
\expandafter\ifx\csname urlstyle\endcsname\relax
  \providecommand{\doi}[1]{doi: #1}\else
  \providecommand{\doi}{doi: \begingroup \urlstyle{rm}\Url}\fi
\bibitem{AMc} D. Achlioptas and F. McSherry. Fast computation of low rank matrix approximations. \textit{In
Proceedings of the thirty-third annual ACM symposium on Theory of computing}, 611–618. ACM, 2001.
\bibitem{AFKMcS1} Y. Azar, A. Flat, A. Karlin, F. McSherry, and J. Saia, \textit{Spectral analysis of data}, Proceedings of the thirty-third annual ACM symposium on Theory of computing. (2001), 619-626. doi:10.1145/380752.380859.
\bibitem{B-GN1} F. Benaych-Georges and R.R. Nadakuditi, \textit{ The eigenvalues and eigenvectors of finite, low rank perturbations of large random matrices}, Adv. Math. \textbf{227} (2011), no. 1, 494-521.
\bibitem{Book1} R. Bhatia. \textit{Matrix Analysis}, Springer: New York, NY, 2013.

\bibitem{BDDMR1} R. Bhattacharjee, G. Dexter, P. Drineas, C. Musco, and A. Ray, \textit{Sublinear time eigenvalue approximation vua random sampling,} Algorithmica. \textbf{86}, no.6 (2024), 1764-1829. 10.1007/s11042-023-15819-7.


\bibitem{BCN1} C. Bordenave, S. Coste, and R. Nadakuditi, \textit{Detection thresholds in very sparse matrix completion}, Foundations of Computational Mathematics. \textbf{23}, no.5 (2022), 1619-1743. 10.1007/s10208-022-09568-6.


\bibitem{CmFmSq1} J. Chavarria-Molina, J. Fallas-Monge, and P. Soto-Quiros, \textit{Effective implementation to reduce execution time of a low-rank matrix approximation problem,} Journal of Computational and Applied Mathematics. \textbf{401}, no.C (2022). 10.1016/j.cam.2021.113763.



\bibitem{CCF1} Y. Chen, C. Chen, and J. Fan, \textit{ Asymmetry helps: Eigenvalue and Eigenvector analyses of asymmetrically perturbed low-rank matrices}, Ann. Stat. \textbf{49} (2021), no. 1, 435.
\bibitem{CLX1} X-S Chen, W. Li and W.W. Xu, \textit{Perturbation analysis of the eigenvector matrix and singular vector matrices}, Taiwanese Journal of Mathematics. \textbf{16}, No.1, (2012), 179-194.
\bibitem{CLK1} X. Chen, M. Lyu, and I. King, \textit{Toward efficient and accurate covariance matrix estimation on compressed data}, Proceedings of the 34th International Conference on Machine Learning. \textbf{70} (2017), 767-776. 10.5555/3305381.3305381. 
\bibitem{CYZLK1} X. Chen, H. Yang, S. Zhao, M. Lyu, and I. King, \textit{Effective Data-Aware covariance estimator from compressed data,} IEEE Transactions on Neutral Networks and Learning Systems. (2020), 1-14/ 10.1109/TNNLS.2019.2929106.


\bibitem{NAbook} E. Darve and M. Wootters, \textit{Numerical Linear Algebra with Julia,} SIAM Philadelphia, 2021. 
\bibitem{DR1} M.A. Davenport and J. Romberg, \textit{An Overview of Low-Rank Matrix Recovery From Incomplete Observations}, IEEE Journal of Selected Topics in Signal Processing. \textbf{10} (2016), no. 4,  608 - 622.
\bibitem{DKoriginal} C. Davis, W.M. Kahan. The rotation of eigenvectors by a perturbation. \textit{III. SIAM J. Numer. Anal.}7, 1970, pp 1-46.


\bibitem{DriM1} P. Drines and M.W. Mahoney, \textit{Randomization offers new benefits for large-scale linear algebra computations}, RandNLA: randomizerd numerical linear algebra. doi:10.1145/2842602.

\bibitem{DM1} F.J. Dyson, M. Lal Mehta. Random matrices and the statistical theory of energy levels IV. \textit{J. math. Phys}, 4:701–12, 1963.


\bibitem{EBW1} J. Eldridge, M. Belkin and Y. Wang, \textit{Unperturbed: spectral analysis beyond Davis-Kahan}, Algorithmic learning theory. PMLR (2018), 321-358.
\bibitem{G1} J. Ginibre. Statistical ensembles of complex, quaternion, and real matrices. \textit{Journal of Mathematical Physics}, 6(3):440–449, 1965.
\bibitem{G2} V.L. Girko. \textit{Circular law. Theory of Probability \& Its Applications}, 29(4):694–706, 1985.
\bibitem{GSS} F. Gotze, H. Sambale, A. Sinulis. Concentration inequalities for polynomials in $\alpha$-sub-exponential random variables. ArXiv:1903.05964v1.
\bibitem{GTV1} L. Grubišić, N. Truhar, and S. Miodragović, \textit{
The rotation of eigenspaces of perturbed matrix pairs II}, Linear Multilinear Algebra. \textbf{62}(2014), no.8, 1010–1031.
\bibitem{HLMNV1} W. Hachem, P. Loubaton, X. Mestre, J. Najim, P. Vallet, \textit{A subspace estimator for fixed rank perturbations of large random matrices}, J. Multivariate Anal. \textbf{114} (2013), 427–447.


\bibitem{H1} W. Hoeffding. Probability inequalities for sums of bounded random variable. \textit{Journal of the American Statistical Association}, 58 issue 301, 13-30, 1963.
\bibitem{HJBook} R.A. Horn, and C.R. Johnson, \textit{Matrix Analysis}, Cambridge University Press, 2012.

\bibitem{JW1} M. Jirak and M. Wahl, \textit{Perturbation bounds for eigenspaces under a relative gap condition}, Proc. Amer. Math. Soc. \textbf{148} (2020), no.2, 479–494.
\bibitem{JW2}  M. Jirak and M. Wahl, \textit{Relative perturbation bounds with applications to empirical covariance operators}, Advances in Mathematics. \textbf{412} (2023).
\bibitem{Kato1} T. Kato, \textit{Perturbation Theory for Linear Operators}, Classics in Mathematics, Springer: New York, NY, 1980.
\bibitem{KX1} V. Koltchinskii and D. Xia, \textit{ Perturbation of linear forms of singular vectors under Gaussian noise}, High dimensional probability VII, Progr. Probab. \textbf{71} (2016), Springer, 397-423.
\bibitem{KL1} V. Koltchinskii, K. Lounici, \textit{Concentration inequalities and moment bounds for sample covariance
operators}, Bernoulli. \textbf{23}, (2017), 110–133.
\bibitem{Li1} R-C. Li, \textit{Relative Perturbation Theory: (II) Eigenspace and Singular Subspace Variations}, SIAM Journal on Matrix Analysis and Applications. \textbf{20}(2), 471-492, 1998.

\bibitem{Mc1} R. McEachin, \textit{A sharp estimate in an operator inequality}, Proc. Amer. Math. Soc., 115 (1992) 161-165.


\bibitem{OVK 13} S. O'Rourke, V. Vu, and K. Wang, \textit{Random Perturbation of low rank matrices: improving classical bounds}, Linear Algebra and its Applications. \textbf{540}, 2018, 26-59. 

\bibitem{OVK 22} S. O'Rourke, V. Vu, and K. Wang, \textit{Matrices with Gaussian noise: Optimal estimates for singular subspace perturbation}, IEEE Transactions on Information Theory. (2023).

\bibitem{RV1} M. Rudelson, R. Vershynin, \textit{Sampling from large matrices: an approach through geometric functional analysis}, J. ACM. \textbf{21}, (2007), 19 pp.
\bibitem{RV2} M. Rudelson, R. Vershynin, \textit{Small ball probabilities for linear images of high-dimensional distributions,} International Mathematics Research Notices 2015. \textbf{19}, (2015), 9594-9617.
\bibitem{SKLZ1} A. Saade, F. Krzakala, M. Lelarge, and L. Zdeborova, \textit{Fast Randomized Semi-Supervised clustering}, Journal of Physics: Conference Series. \textbf{1036} (2018). 10.1088/1742-6596/1036/1/012015. 
\bibitem{SS} W. Schudy, M. Sviridenko. Concentration and Moment Inequalities for Polynomials of Independence Random Variables.  ArXiv:1104.4997v3.
\bibitem{SN1} T.A.B. Snijders, K. Nowicki, Estimation and prediction for stochastic blockmodels for graphs with latent block structure. \textit{Journal of Classification}, \textbf{14}(1), 1997, pp. 75-100.

\bibitem{SWZC1} Z. Song, D. Woodruff, P. Zhong, and T. Chan, \textit{Relative error tensor low rank approximation,} Proceedings of the 30th Annual ACM-SIAM Symposium on Discrete Algorithms. (2019), 2772-2789. 10.5555/3310435.3310607. 

\bibitem{SqCFT1} P. Soto-Quiros, J. Chavarria-Molina, J. Fallas-Monge, and A. Torokhti, \textit{Fast multiple rank-constrained matrix approximation}, SeMA Journal. (2023) 10.1007/s40324-023-00340-6.

\bibitem{St1} W.L. Steiger. Some Kolmogoroff-type inequalities for bounded random variables. \textit{Biometrika}, 54, 641-647, 1967.


\bibitem{SS1} G.W. Stewart, J. Sun. \textit{Matrix Perturbation Theory}. Academic Press. 
\bibitem{Sun1} J-G. Sun, \textit{Perturbation bounds for eigenspaces of a definite matrix pair}, Acta Math. Sinica. \textbf{24} (1981), no.6, 892-903.
\bibitem{UT1} M. Udell, A. Townsend, \textit{Why are big data matrices approximately low rank?}, SIAM Journal on Mathematics of Data Science. \textbf{1}, 1 (2019), pp. 144-160.



\bibitem{Vbook} R. Vershynin, \textit{High-Dimensional probability - An introduction with applications in data science}, Cambridge Series in Statistical and Probabilistic Mathematics. Cambridge University Press, New York, 2018. DOI: 10.1017/9781108231596. 

\bibitem{Vu0} V. Vu, \textit{Spectral norm of random matrices}, Combinatorica. \textbf{27}, no.6 (2007), 721-736.
\bibitem{Vu1} V. Vu, \textit{Singular vectors under random perturbation}, Random Structures and Algorithm. \textbf{39} (2011), no. 4, 526-538.
\bibitem{WW1} M.J. Wainwright, \textit{High-Dimensional Statistics: A Non-Asymptotic view point}, Cambridge Series in Statistical and Probabilistic Mathematics, NY (2019).
\bibitem{WMHZ1} Y. Wan, A. Ma, W. He, and Y. Zhong, \textit{Accurate Multiobjective low-rank and sparse model for hyperspectral image denoising method,} IEEE Transactions on Evolutionary Computation. \textbf{27}, no. 1 (2023), 37-51. 10.1109/TEVC.2021.3078478.

\bibitem{Wa1} R. Wang. Singular vector perturbation under Gaussian noise. \textit{SIAM J. Matrix Anal. Appl.}, 36(1): 158-177, 2015.

\bibitem{Wei1} M. Wei, \textit{Perturbation theory for the Eckart-Young-Mirsky theorem and the constrained total least squares problem,} Linear Algebra Appl. \textbf{280} (1998), no.2-3, 267–287.

\bibitem{We1} Hermann. Weyl, \textit{Das asymptotische Verteilungsgesetz der Eigenwerte linearer partieller Differentialgleichungen}, Mathematische Annalen. \textbf{71} (1912), no. 4, 441-479.
\bibitem{Wu1} Q. Wu, \textit{Relative perturbation bounds for the eigenspace of generalized extended matrices under multiplicative perturbation}, Math. Theory Appl. \textbf{32} (2012), no.2, 53-59.

\bibitem{XY1} D. Xia and M. Yuan, \textit{Effective Tensor Sketching via Sparsification,} IEEE Transactions on Information Theory. \textbf{67}, no. 2 (2021), 1356-1369. 10.1109/TIT.2021.3049174.
\bibitem{YZ1} R. Yuster, U. Zwick, \textit{Fast sparse matrix multiplication}, ACM Transactions on Algorithms (TALG). \textbf{1} (2005), Issue 1, 2 - 13 https://doi.org/10.1145/1077464.1077466.

\bibitem{Z1} Y. Zhong, \textit{Eigenvector Under Random Perturbation: A Nonasymptotic Rayleigh-Schr\"{o}dinger Theory}, (2017). arXiv preprint arXiv:1702.00139.



 
\end{thebibliography}
\end{document}